\documentclass[11pt,reqno]{amsart}

\usepackage[usenames]{color}
\usepackage{graphicx}
\usepackage{amscd}
\usepackage[colorlinks=true,
linkcolor=webgreen,
filecolor=webbrown,
citecolor=webgreen]{hyperref}

\definecolor{webgreen}{rgb}{0,.5,0}
\definecolor{webbrown}{rgb}{.6,0,0}
\usepackage{amsthm}
\usepackage{fullpage}
\usepackage{enumerate}
\usepackage{epstopdf}
\usepackage{graphics,amsmath,amssymb}
\usepackage{graphicx}

\usepackage{amsfonts}
\usepackage{latexsym}
\usepackage{url}

\setlength{\textwidth}{6.5in}
\setlength{\oddsidemargin}{.1in}
\setlength{\evensidemargin}{.1in}
\setlength{\topmargin}{-.5in}
\setlength{\textheight}{8.9in}

\theoremstyle{plain}
\newtheorem{theorem}{Theorem}[section]
\newtheorem{lemma}[theorem]{Lemma}
\newtheorem{corollary}[theorem]{Corollary}
\newtheorem{proposition}[theorem]{Proposition}

\theoremstyle{definition}

\theoremstyle{remark}


\def\adots{
  \mathinner{\mkern1mu\raise1pt\hbox{.}\mkern2mu\raise4pt\hbox{.}
  \mkern2mu\raise7pt\vbox{\kern7pt\hbox{.}}\mkern1mu}}
\newcommand \lcm{\operatorname{lcm}}

\begin{document}

\begin{center}

\title[Characterization of the strong divisibility property for generalized Fibonacci polynomials]
 {Characterization of the strong divisibility property for generalized Fibonacci polynomials}
 \author{Rigoberto Fl\'orez}
 \address{Department of Mathematical Sciences\\
		The Citadel\\
		Charleston, SC \\
		U.S.A.}
  \email{rflorez1@citadel.edu}
  \author{Robinson A. Higuita}
 \address{Instituto de Matem\'aticas,\\
        Universidad de Antioquia,\\
        Medell\'in, Colombia}
 \email{robinson.higuita@udea.edu.co}
 \author{Antara Mukherjee}
 \address{Department of Mathematical Sciences\\
		The Citadel\\
		Charleston, SC \\
		U.S.A.}
 \email{mukherjeea1@citadel.edu}
\end{center}

\maketitle
\centerline{\bf Abstract}
\noindent
 It is known that the greatest common divisor of two Fibonacci numbers is again a Fibonacci number.
 This is called the  \emph{strong divisibility property}. However, strong divisibility does not hold for
 every second order sequence. In this paper we study the generalized Fibonacci polynomials and
 classify them in two types depending on their Binet formula. We give a complete
 characterization of those polynomials that  satisfy the strong divisibility property.  We also give
 formulas to calculate the greatest common divisor of those polynomials that do not satisfy the strong divisibility property.

 Note. This paper is now published in INTEGERS.

\section {Introduction}

It is well-known that the greatest common divisor ($\gcd$) of two Fibonacci numbers is a Fibonacci
number \cite{koshy}. In fact, $\gcd(F_m,F_n)=F_{\gcd(m,n)}$.  This is called  the
\emph{strong divisibility property} or \emph{Fibonacci gcd property}. We study divisibility
properties of generalized Fibonacci polynomials (GFP) and in particular we give a characterization
of the  strong divisibility property for these polynomials.

We classify the GFPs into two types, the Lucas type and the Fibonacci type, depending on their closed formulas
or their Binet formulas (see for example, $L_n(x)$ (\ref{bineformulados}) and $R_n(x)$ (\ref{bineformulauno}),
and Table \ref{equivalent}). That is, if after solving the characteristic polynomial of a GFP sequence we obtain a
closed formula that looks like the Binet formula for Fibonacci (Lucas) numbers, we call the sequence a Fibonacci (Lucas)
type sequence. Familiar examples of Fibonacci type polynomials are:  Fibonacci polynomials,
 Pell polynomials, Fermat polynomials, Chebyshev polynomials of the second kind, Jacobsthal polynomials,
 and one type of Morgan-Voyce polynomials.  Examples of Lucas type polynomials are:
 Lucas polynomials, Pell-Lucas polynomials, Fermat-Lucas polynomials, Chebyshev polynomials of the first kind,
 Jacobsthal-Lucas polynomials, and the second type of Morgan-Voyce  polynomials. Horadam \cite{horadam-synthesis} and
 Andr\'e-Jeannin \cite{Richard} have studied these polynomials in detail.

In Theorem \ref{gcd:property:fibonacci} we prove that a GFP satisfies the strong divisibility property if
and only if it is of Fibonacci type. Theorem \ref{second:main:thm} shows that the Lucas type polynomials
partially satisfy the strong divisibility property and also gives the $\gcd$  for those cases in which the
property is not satisfied.

A Lucas type polynomial is equivalent (or conjugate) to a Fibonacci type polynomial if they both have the same recurrence
relation but different initial conditions (see also Fl\'orez et al. \cite{florezHiguitaMuk}).
Theorem \ref{combine:gcd:Lucas:Fibobacci} proves that two equivalent GFPs  partially satisfy
the strong divisibility property  and  gives the $\gcd$  for those cases in which the property is
not satisfied.

In 1969 Webb and Parberry \cite{Webb} extended the strong divisibility property to Fibonacci polynomials. In 1974 Hoggatt and
Long \cite{HoggattLong} proved the strong divisibility property for one type of  generalized Fibonacci polynomial. In 1978 Hoggatt
and Bicknell-Johnson \cite{hoggatt} extended the result mentioned in \cite{HoggattLong} to some cases of Fibonacci type polynomials.
In 2005 Rayes, et al. \cite{Rayes} proved that the strong divisibility property holds partially for the Chebyshev polynomials
(we prove the general result in Theorem \ref{second:main:thm}). Over the years several other authors
\cite{Barbero, bliss, hall, kimberling, kimberlingStrong1, kimberlingStrong2, lucas, mcdaniel, norfleet, nowicki, ward}
have also been interested in the divisibility properties of sequences.

Lucas \cite{lucas} proved the strong divisibility property (SDP) for Fibonacci numbers. However, the study of the
SDP for Lucas numbers did not occur until 1991, when McDaniel \cite{mcdaniel} proved that the Lucas numbers  partially satisfy
the SDP. In 1995 Hilton, et al. \cite{hilton} gave some more precise results about this property.
As mentioned above, several authors have been interested in the divisibility properties for
Fibonacci type polynomials. However, the Lucas type polynomials have been less studied. Here we give a complete study of all
three cases of the SDP. Indeed, we give a characterization of the SDP for Fibonacci type polynomials and
study both the SDP for Lucas type polynomials and the SDP for the combinations of Lucas type polynomials and
Fibonacci type polynomials. Finally we provide an open question for the most general case of  the combination of
two polynomials.

\section{Generalized Fibonacci polynomials}\label{General:Fibonacci:Polynomial}
In the literature there are several definitions of generalized Fibonacci polynomials.
The definition that we introduce here is simpler and covers other definitions.
The background given in this section is a summary of the background given in \cite{florezHiguitaMuk}.
However, the definition of generalized Fibonacci polynomial here is not exactly the same as in \cite{florezHiguitaMuk}.
The \emph{generalized Fibonacci polynomial} sequence $\{ G_{n}(x)\}$, denoted by GFP, is defined by the following recurrence relation:
if $p_0(x)$ is a constant and $p_1(x)$, $d(x)$, and $g(x)$ are non-zero polynomials in $\mathbb{Z}[x]$ with $\gcd(d(x), g(x))=1$, then
\begin{equation}\label{Fibonacci;general}
G_0(x)=p_0(x), \; G_1(x)= p_1(x),\;  \text{and} \;  G_{n}(x)= d(x) G_{n - 1}(x) + g(x) G_{n - 2}(x)
\end{equation}
  for  $n\ge 2$.

For example, if we let $p_0(x)=0$, $p_1(x)=1$, $d(x)=x$, and $g(x)=1$ we obtain the regular Fibonacci polynomial sequence. Thus,
\[F_0(x)= 0, \; F_1(x)= 1, \; \text{and} \;  F_{n}(x)= x F_{n - 1}(x) + F_{n - 2}(x) \text{ for } n\ge 2.\]

Letting $x=1$ and choosing the correct values for $p_0(x)$, $p_1(x)$, $d(x)$, and $g(x)$,  the generalized Fibonacci
polynomial sequence gives rise to three classical numerical sequences: the Fibonacci sequence, the Lucas sequence and the
generalized Fibonacci sequence.

Table \ref{familiarfibonacci} provides familiar examples of the GFPs  (see \cite{florezHiguitaMuk, Pell,Fermat,koshy}).
Schechter polynomials \cite{hoggatt} are also examples of generalized Fibonacci polynomials.

\begin{table} [!ht]
\begin{center}\scalebox{0.8}{
\begin{tabular}{|l|l|l|l|} \hline
  Polynomial             & Initial value     & Initial value  & Recursive Formula 						                \\	
    			         &$G_0(x)=p_0(x)$  	 &$G_1(x)=p_1(x)$ &$G_{n}(x)= d(x) G_{n - 1}(x) + g(x) G_{n - 2}(x)$ 	    \\  \hline \hline
  Fibonacci              & $0$	             &$1$             &$F_{n}(x) = x F_{n - 1}(x) + F_{n - 2}(x)$	 	        \\
  Lucas 	             &$2$	             &$x$ 	          &$D_n(x)= x D_{n - 1}(x) + D_{n - 2}(x)$                  \\ 						
  Pell			    	 &$0$	             &$1$             &$P_n(x)= 2x P_{n - 1}(x) + P_{n - 2}(x)$                 \\
  Pell-Lucas 	    	 &$2$	             &$2x$            &$Q_n(x)= 2x Q_{n - 1}(x) + Q_{n - 2}(x)$                 \\
  Pell-Lucas-prime 	     &$1$	             &$x$             &$Q_n^{'}(x)= 2x Q_{n - 1}^{'}(x) + Q_{n - 2}^{'}(x)$     \\
  Fermat  	             &$0$	             &$1$             &$\Phi_n(x)= 3x\Phi_{n-1}(x)-2\Phi_{n-2}(x) $             \\
  Fermat-Lucas	         &$2$	             &$3x$  	      &$\vartheta_n(x)=3x\vartheta_{n-1}(x)-2\vartheta_{n-2}(x)$\\
  Chebyshev second kind  &$0$	             &$1$             &$U_n(x)= 2x U_{n-1}(x)-U_{n-2}(x)$  	 	                \\
  Chebyshev the first kind   &$1$	             &$x$             &$T_n(x)= 2x T_{n-1}(x)-T_{n-2}(x)$                       \\	 	
  Jacobsthal  	  		 &$0$	             &$1$             &$J_n(x)= J_{n-1}(x)+2xJ_{n-2}(x)$ 	 	                \\
  Jacobsthal-Lucas	     &$2$		         &$1$             &$j_n(x)=	j_{n-1}(x)+2xj_{n-2}(x)$                        \\
  Morgan-Voyce	         &$0$		         &$1$             &$B_n(x)= (x+2) B_{n-1}(x)-B_{n-2}(x) $  	 	            \\
  Morgan-Voyce 	         &$2$		         &$x+2$           &$C_n(x)= (x+2) C_{n-1}(x)-C_{n-2}(x)$  	 	            \\
  Vieta 		         &$0$ 	   	         &$1$	          &$V_n(x)=x V_{n-1}(x)-V_{n-2}(x)$ 	                    \\
  Vieta-Lucas 		     &$2$ 	   	         &$x$	          &$v_n(x)=x v_{n-1}(x)-v_{n-2}(x)$                         \\
  \hline
\end{tabular}}
\end{center}
\caption{Recurrence relation of some GFPs.} \label{familiarfibonacci}
\end{table}

\subsection{Fibonacci type and Lucas type polynomials}

If we impose some conditions on Definition (\ref{Fibonacci;general}) we obtain two types of distinguishable polynomials.
We say that a sequence as in  (\ref{Fibonacci;general}) is Lucas type or the first type,  if
$2p_{1}(x)=p_{0}(x)d(x)$ with  $p_{0}\ne 0$.  We say that a sequences as in
(\ref{Fibonacci;general}) is Fibonacci type or the second type if $p_{0}(x)=0$ and  $p_{1}(x)$ a non-zero constant.

If $d^2(x)+4g(x)> 0$, then the explicit formula for the recurrence relation (\ref{Fibonacci;general}) is given by

\begin{equation}\label{solutionrecurrencerelationuno}
 G_{n}(x) = t_1 a^{n}(x) + t_2 b^{n}(x)
\end{equation}
where $a(x)$ and $b(x)$ are the solutions of the quadratic equation associated with the second order
recurrence relation $G_{n}(x)$. That is,  $a(x)$ and $b(x)$ are the solutions of $z^2-d(x)z-g(x)=0$.
The explicit formula for $G_{n}(x)$ given in (\ref{solutionrecurrencerelationuno})
with $G_{0}(x)=p_{0}(x)$ and $G_{1}(x)=p_{1}(x)$ implies that
\begin{equation}\label{solutionrecurrencerelationdos}
 t_{1}=\dfrac{p_{1}(x)-p_{0}(x)b(x)}{a(x)-b(x)} \quad \text{ and } \quad  t_{2}=\dfrac{-p_{1}(x)+p_{0}(x)a(x)}{a(x)-b(x)}.
\end{equation}

Using (\ref{solutionrecurrencerelationuno}) and  (\ref{solutionrecurrencerelationdos}) we obtain the Binet
formulas for the generalized Fibonacci sequences of Lucas type and  Fibonacci type. Thus, substituting
$2p_{1}(x)=p_{0}(x)d(x)$ in (\ref{solutionrecurrencerelationdos}) we obtain that $t_{1}=t_{2}= p_{0}(x)/2$.
Substituting these values of $t_{1}$ and $t_{2}$ in (\ref{solutionrecurrencerelationuno}) and letting  $\alpha$ be  $2/p_{0}(x)$  we obtain:
the Binet formula for generalized Fibonacci sequence of Lucas type,
\begin{equation}\label{bineformulados}
L_n(x)=\frac{a^{n}(x)+b^{n}(x)}{\alpha}.
\end{equation}
We want $\alpha$ to be an integer, and therefore  $|p_{0}(x)|=1 \text { or } 2$.

Now, substituting $p_{0}(x)=0$ and the constant $p_{1}(x)$ in (\ref{solutionrecurrencerelationdos})
we obtain that $t_{1}=t_{2}=p_{1}(x)$.
Substituting this in (\ref{solutionrecurrencerelationuno})  we obtain  the Binet formula for the
generalized Fibonacci sequence of Fibonacci type,
\begin{equation}\label{bineformulaunogeneral}
R_n(x)=\frac{p_{1}(x)\left(a^{n}(x)-b^{n}(x)\right)}{a(x)-b(x)}.
\end{equation}
In this paper we are interested only in $R_n(x)$ when $p_1(x)=1$ (see also \cite{HoggattLong}). Therefore, the Binet formula $R_n(x)$ used here is:
\begin{equation}\label{bineformulauno}
R_n(x)=\frac{a^{n}(x)-b^{n}(x)}{a(x)-b(x)}.
\end{equation}
Note that if $d(x)$ and $g(x)$ are the polynomials defined in \eqref{Fibonacci;general}, then
 $a(x)+b(x)=d(x)$, $a(x)b(x)= -g(x)$, and $a(x)-b(x)=\sqrt{d^2(x)+4g(x)}$ .

The sequence of polynomials that have Binet representations $R_n(x)$ or $L_n(x)$ depend only on $d(x)$ and $g(x)$ defined
in  \eqref{Fibonacci;general}.  We say that a generalized Fibonacci sequence of Lucas (Fibonacci) type is \emph{equivalent}, or
the \emph{conjugate}, to a sequence of the Fibonacci (Lucas) type, if their recursive sequences are determined by the same
polynomials $d(x)$ and $g(x)$. Notice that two equivalent polynomials have the same $a(x)$ and $b(x)$ in their Binet
representations.

For example, the Lucas polynomial is a GFP of Lucas type, whereas the Fibonacci polynomial is a GFP of
Fibonacci type. Lucas and Fibonacci
polynomials are equivalent  because   $d(x)=x$ and $g(x)=1$ for both (see Table \ref{familiarfibonacci}). Note that in their Binet
representations they both have  $a(x)= (x+\sqrt{x^2+4})/2$ and $b(x)=(x-\sqrt{x^2+4})/2$.
Table \ref{equivalent} is based on information from the following papers \cite{Richard, florezHiguitaMuk, horadam-synthesis}.
The leftmost polynomials in Table \ref{equivalent} are of the Lucas type and their equivalent polynomials
are in the second column on the same line. In the last two columns of Table \ref{equivalent} are the $a(x)$ and $b(x)$ that
the pairs of equivalent polynomials share. It is easy to obtain the characteristic equations of the sequences given in
Table \ref{familiarfibonacci} where the roots of each equation are given by  $a(x)$ and $b(x)$.

For the sake of simplicity throughout this paper we use $a$ in place of $a(x)$ and $b$ in place of $b(x)$ when they appear in the
Binet formulas. We use the notation $G_n^{*}$ or $G_n^{\prime}$ for $G_n$ depending on whether it satisfies the Binet formulas
 (\ref{bineformulados}) or  (\ref{bineformulauno}), respectively, (see Section \ref{mainresults:section}).

For most of the proofs of GFPs of Lucas type it is required that
$$ \gcd(p_0(x), p_1(x))=1, \quad \gcd(p_0(x), d(x))=1, \quad \text{ and } \quad \gcd(d(x), g(x))=1.$$
It is easy to see that $\gcd(\alpha, G_{n}^{*}(x))=1$.
Therefore, for the rest of the paper we suppose that these conditions mentioned hold for all
GFP sequences of Lucas type treated here.
We use $\rho$ to denote $\gcd(d(x),G_1(x))$. Notice that in the definition of Pell-Lucas we have
$p_0(x)=2$ and $p_1(x)=2x$. Thus, the $\gcd(p_0(x),p_1(x))\ne 1$.
Therefore, Pell-Lucas does not satisfy the extra conditions that were just imposed on the GFPs.
To solve this problem we define \emph{Pell-Lucas-prime} as follows:
\[Q_0^{'}(x)= 1, \; Q_1^{'}(x)= x, \; \text{and} \;  Q_{n}^{'}(x)= 2x Q_{n - 1}^{'}(x) + Q_{n - 2}^{'}(x) \text{ for } n\ge 2.\]
It easy to see  that $2Q_{n}^{'}(x)=Q_{n}(x)$ and that $\alpha=2$.
Fl\'orez, et al.  \cite{florezHiguitaJunes} worked on similar problems for numerical sequences.

\begin{table} [!ht]
\begin{center}
\scalebox{0.8}{
\begin{tabular}{|l|l|l|l|} \hline
   Polynomial  	    	  	&Polynomial of 		&$a(x)$ 	               & $b(x)$			         \\	
   Lucas type 	 	        &Fibonacci type  	& 					       &      				     \\ \hline \hline
   Lucas 			       	&Fibonacci 		    &  $(x+\sqrt{x^2+4})/2$    & $(x-\sqrt{x^2+4})/2$    \\ 						
   Pell-Lucas-prime 	   	&Pell			    &  $x+\sqrt{x^2+1}$	       & $x-\sqrt{x^2+1}$        \\
   Fermat-Lucas 	       	&Fermat 			&  $(3x+\sqrt{9x^2-8})/2$  & $(3x-\sqrt{9x^2-8})/ 2$ \\
   Chebyshev 1st kind       &Chebyshev 2nd kind &  $x +\sqrt{x^2-1}$       & $x -\sqrt{x^2-1}$       \\
   Jacobsthal-Lucas	       	&Jacobsthal  		&  $(1+\sqrt{1+8x})/2$     & $(1-\sqrt{1+8x})/2$     \\
   Morgan-Voyce 	        &Morgan-Voyce	    &  $(x+2+\sqrt{x^2+4x})/2$ & $(x+2-\sqrt{x^2+4x})/2$ \\
   Vieta-Lucas              &Vieta            	& $(x+\sqrt{x^2-4})/2$     & $(x-\sqrt{x^2-4})/2$    \\ \hline
\end{tabular}}
\end{center}
\caption{$R_n(x)$ equivalent to $L_n(x)$.} \label{equivalent}
\end{table}

\textbf{Note.}  The definition of GFPs in \cite{florezHiguitaMuk}
differs from the definition in this paper due to the initial conditions of the Fibonacci type polynomials.
Thus, the initial condition for the Fibonacci type polynomials in \cite{florezHiguitaMuk} is  $G_{0}(x)=p_{0}(x)=1$
and so implicitly  $G_{-1}(x)=0$. However, our definition for the Lucas type polynomials is the same in both papers.

\section{Divisibility properties of GFPs}

In this section we prove a few divisibility and $\gcd$ properties which are true for all GFPs. These
results will be used in a  later section to prove the main results of this paper.

Proposition \ref{prop2;1} parts (1) and (2) is a generalization of Proposition 2.2 in  \cite{florezjunes}.
The proof here is similar to the proof in \cite{florezjunes} since both use properties of integral domains.
The reader can therefore update the proof in the
afore-mentioned paper to obtain the proof of this proposition.

\begin{proposition}\label{prop2;1}
Let $p(x), q(x), r(x),$ and $s(x)$ be polynomials.
\begin{enumerate}[(1)]
  \item If $\gcd(p(x),q(x))=\gcd(r(x),s(x))=1$, then $\gcd(p(x)q(x),r(x)s(x))$\\
  		is equal to $\gcd(p(x),r(x))\gcd(p(x),s(x))\gcd(q(x),r(x))\gcd(q(x),s(x)).$

  \item If $\gcd(p(x),r(x))=1$ and $\gcd(q(x),s(x))=1$, then
     	$$\gcd(p(x)q(x),r(x)s(x))=\gcd(p(x),s(x))\gcd(q(x),r(x)).$$
  \item If $z_1(x)=\gcd(p(x),r(x))$ and $z_2(x)=\gcd(q(x),s(x))$, then
     	$$\gcd(p(x)q(x),r(x)s(x))=\frac{\gcd( z_2(x)p(x),z_1(x)s(x))\gcd( z_1(x)q(x), z_2(x)r(x))}{z_1(x)z_2(x)}.$$
\end{enumerate}
\end{proposition}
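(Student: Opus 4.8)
The plan is to deduce part (3) from part (2) by dividing out the two greatest common divisors and reducing to the coprime situation already handled. First I would use the hypotheses $z_1(x)=\gcd(p(x),r(x))$ and $z_2(x)=\gcd(q(x),s(x))$ to factor $p(x)=z_1(x)p_1(x)$, $r(x)=z_1(x)r_1(x)$, $q(x)=z_2(x)q_1(x)$, and $s(x)=z_2(x)s_1(x)$, where the cofactors satisfy $\gcd(p_1(x),r_1(x))=1$ and $\gcd(q_1(x),s_1(x))=1$. This coprimality is the standard fact that dividing by the gcd leaves factors with no common divisor; it holds because $\mathbb{Z}[x]$ is a unique factorization domain, so gcd's exist and are determined up to a unit.

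Next I would observe that $p(x)q(x)=z_1(x)z_2(x)p_1(x)q_1(x)$ and $r(x)s(x)=z_1(x)z_2(x)r_1(x)s_1(x)$, and pull the common factor $z_1(x)z_2(x)$ out of the gcd to obtain $\gcd(p(x)q(x),r(x)s(x))=z_1(x)z_2(x)\gcd(p_1(x)q_1(x),r_1(x)s_1(x))$. Since $\gcd(p_1(x),r_1(x))=1$ and $\gcd(q_1(x),s_1(x))=1$, part (2) applies directly to the remaining factor and yields $\gcd(p_1(x)q_1(x),r_1(x)s_1(x))=\gcd(p_1(x),s_1(x))\gcd(q_1(x),r_1(x))$. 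Combining these, the left-hand side of (3) equals $z_1(x)z_2(x)\gcd(p_1(x),s_1(x))\gcd(q_1(x),r_1(x))$.

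Then I would verify that the right-hand side of (3) collapses to the same expression. Substituting the factorizations gives $z_2(x)p(x)=z_1(x)z_2(x)p_1(x)$ and $z_1(x)s(x)=z_1(x)z_2(x)s_1(x)$, so $\gcd(z_2(x)p(x),z_1(x)s(x))=z_1(x)z_2(x)\gcd(p_1(x),s_1(x))$; symmetrically $\gcd(z_1(x)q(x),z_2(x)r(x))=z_1(x)z_2(x)\gcd(q_1(x),r_1(x))$. Dividing the product of these two expressions by $z_1(x)z_2(x)$ produces exactly $z_1(x)z_2(x)\gcd(p_1(x),s_1(x))\gcd(q_1(x),r_1(x))$, which matches the computed left-hand side and closes the argument.

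The main obstacle, as with parts (1) and (2), is not the algebra but justifying that one may factor a common polynomial out of a gcd and that the resulting cofactors are coprime; both steps rest on $\mathbb{Z}[x]$ being an integral domain (indeed a UFD) in which gcd's are well defined only up to units, so every identity above is really an equality of associate classes. I would also record that $z_1(x)$ and $z_2(x)$ are nonzero, being gcd's of nonzero polynomials, so that the division by $z_1(x)z_2(x)$ appearing in the statement is legitimate.
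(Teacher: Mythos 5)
Your proposal is correct and follows essentially the same route as the paper: factor out $z_1(x)$ and $z_2(x)$ to get coprime cofactors, pull $z_1(x)z_2(x)$ out of the gcd, apply part (2) to the coprime pieces, and match this against the right-hand side. The only difference is cosmetic — you spell out explicitly the final verification that $\gcd(z_2(x)p(x),z_1(x)s(x))\gcd(z_1(x)q(x),z_2(x)r(x))/(z_1(x)z_2(x))$ collapses to the same expression, a step the paper dismisses as "easy to see."
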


\begin{proof} We prove part (3). Since $\gcd(p(x),r(x))=z_1(x) $ and $\gcd(q(x),s(x))=z_2(x)$,
	there are polynomials $P(x)$, $S(x)$, $Q(x)$, and
	$R(x)$ with $$\gcd(P(x),R(x))=\gcd(Q(x),S(x))=1,$$
 such that $ p(x) = z_1(x)  P(x)$,\;  $s(x)= z_2(x)  S(x)$,\;  $r(x) = z_1(x)  R(x)$,  and
$q(x)= z_2(x)  Q(x)$. So,
\begin{eqnarray*}
\gcd(p(x)q(x),r(x)s(x)) &=&\gcd(z_1(x) P(x) z_2(x)Q(x), z_1(x) R(x) z_2(x)S(x))\\
&=&z_1(x)z_2(x)\gcd( P(x)Q(x), R(x)S(x)).
\end{eqnarray*}
From part (2) we know that
$$
\gcd( P(x)Q(x), R(x) S(x))=\gcd( P(x),S(x))\gcd( Q(x), R(x)).
$$
Now it is easy to see that
\[
\gcd(p(x)q(x),r(x)s(x))=\frac{\gcd( z_2(x)p(x),z_1(x)s(x))\gcd( z_1(x)q(x), z_2(x)r(x))}{z_1(x)z_2(x)}.
\]
This proves part (3).
\end{proof}

We recall that  $\rho=\gcd(d(x),G_1(x))$ and that for GFPs of Lucas type it is required that
$\gcd(p_0(x), p_1(x))=1$, $\gcd(p_0(x), d(x))=1$,  $\gcd(p_0(x), g(x))=1$, and that $\gcd(d(x), g(x))=1$.
We also recall that $p_0(x)=0$ and $p_1(x)=1$ for GFPs of Fibonacci type.

For the rest of the paper we use the notation $G_n^{*}$ if the GFP $G_n$ satisfies the Binet formula
(\ref{bineformulados}) and $G_n^{\prime}$ if the GFPs $G_n$ satisfies the Binet formula
(\ref{bineformulauno}). We use  $G_n$ if the result
does not need the mentioned classification to be true. We recall that for Lucas type polynomials
$|p_{0}(x)| =1 \text{ or } 2$ and for Fibonacci type polynomials $p_{1}(x) =1$.   Lemma \ref{gcdlemmas} part (3) is \cite[Lemma 3]{HoggattLong}.

\begin{lemma} \label{gcdlemmas} If $G_{n}(x)$ is a GFP of either Lucas or Fibonacci type, then
\begin{enumerate}[(1)]
   \item $\gcd(d(x), G_{2n+1}(x))=G_1(x)$ for every positive integer $n$.

    \item If the GFP is of Lucas type, then $\gcd(d(x), G_{2n}^{*}(x))= 1$ and

          if the GFP is of Fibonacci type, then $\gcd(d(x), G_{2n}^{\prime}(x))= d(x)$.

    \item If $n$ is a positive integer, then $\gcd(g(x), G_n(x))=\gcd(g(x), G_{1}(x))=1$.
\end{enumerate}
\end{lemma}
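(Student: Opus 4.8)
The plan is to analyze the sequence $\{G_n(x)\}$ separately modulo $d(x)$ and modulo $g(x)$, since in each case the two-term recurrence collapses to a one-term recurrence that can be solved in closed form. Reducing $G_n(x)=d(x)G_{n-1}(x)+g(x)G_{n-2}(x)$ modulo $d(x)$ gives $G_n(x)\equiv g(x)G_{n-2}(x)\pmod{d(x)}$, so a straightforward induction on $n$ (with base cases $G_0\equiv p_0(x)$ and $G_1\equiv p_1(x)$) yields the closed forms $G_{2n}(x)\equiv g^n(x)\,p_0(x)$ and $G_{2n+1}(x)\equiv g^n(x)\,p_1(x)\pmod{d(x)}$. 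Similarly, reducing modulo $g(x)$ gives $G_n(x)\equiv d(x)G_{n-1}(x)\pmod{g(x)}$, whence induction yields $G_n(x)\equiv d^{\,n-1}(x)\,p_1(x)\pmod{g(x)}$ for every $n\ge 1$. These three congruences are the engine for all three parts.

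For parts (1) and (2), I would rewrite $\gcd(d(x),G_{2n+1}(x))=\gcd(d(x),g^n(x)\,p_1(x))$ and $\gcd(d(x),G_{2n}(x))=\gcd(d(x),g^n(x)\,p_0(x))$ using the congruences above. Since $\gcd(d(x),g(x))=1$ and $\mathbb{Z}[x]$ is a UFD, we have $\gcd(d(x),g^n(x))=1$, so the factor $g^n(x)$ may be discarded, leaving $\gcd(d(x),p_1(x))=\gcd(d(x),G_1(x))=\rho$ and $\gcd(d(x),p_0(x))$ respectively. For the Fibonacci type these finish immediately: $p_1(x)=1$ gives $\gcd(d(x),G_1(x))=1=G_1(x)$ in part (1), while $p_0(x)=0$ gives $d(x)\mid G_{2n}'(x)$, hence $\gcd(d(x),G_{2n}'(x))=d(x)$ in part (2). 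For the Lucas type, part (2) uses the standing hypothesis $\gcd(p_0(x),d(x))=1$ to conclude $\gcd(d(x),G_{2n}^{*}(x))=1$.

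The delicate point, and the main obstacle, is the Lucas-type case of part (1), where I must show $\gcd(d(x),G_1(x))=G_1(x)$, i.e.\ that $G_1(x)=p_1(x)$ divides $d(x)$. This is where the defining identity $2p_1(x)=p_0(x)d(x)$ together with the constraint $|p_0(x)|\in\{1,2\}$ enters, and I would split into two cases: if $|p_0(x)|=2$ then $p_1(x)=\pm d(x)$, so $\gcd(d(x),p_1(x))=d(x)=G_1(x)$ up to a unit; if $|p_0(x)|=1$ then $d(x)=\pm 2p_1(x)$, so $p_1(x)\mid d(x)$ and again $\gcd(d(x),p_1(x))=p_1(x)=G_1(x)$. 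Finally, for part (3) the congruence modulo $g(x)$ gives $\gcd(g(x),G_n(x))=\gcd(g(x),d^{\,n-1}(x)\,p_1(x))=\gcd(g(x),p_1(x))=\gcd(g(x),G_1(x))$ after stripping the coprime factor $d^{\,n-1}(x)$; this equals $1$ because for the Fibonacci type $p_1(x)=1$, while for the Lucas type the identity $2p_1(x)=p_0(x)d(x)$ combined with $\gcd(g(x),p_0(x))=\gcd(g(x),d(x))=1$ forces $\gcd(g(x),p_1(x))=1$.
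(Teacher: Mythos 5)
Your proof is correct, but it is organized around a different device than the paper's. The paper proves parts (1) and (2) by inducting on the gcd identity itself: writing $G_{2n+1}(x)-d(x)G_{2n}(x)=g(x)G_{2n-1}(x)$, it shows any common divisor of $d(x)$ and $G_{2n+1}(x)$ divides $g(x)G_{2n-1}(x)$, invokes $\gcd(d(x),g(x))=1$, and concludes $\gcd(d(x),G_{2n+1}(x))=\gcd(d(x),G_{2n-1}(x))$, descending step by step to the base quantity $\gcd(d(x),G_1(x))$ (respectively $\gcd(d(x),G_2(x))$ for part (2)), which is then evaluated by the same type-by-type analysis you give: for Fibonacci type $G_1(x)=1$ and $G_2'(x)=d(x)$; for Lucas type $G_1^{*}(x)=d(x)/\alpha$ divides $d(x)$, and $G_2^{*}(x)=d(x)G_1^{*}(x)+G_0^{*}(x)g(x)$ is coprime to $d(x)$. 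You instead solve the collapsed recurrences in closed form, $G_{2n}(x)\equiv g^n(x)p_0(x)$ and $G_{2n+1}(x)\equiv g^n(x)p_1(x) \pmod{d(x)}$, and $G_n(x)\equiv d^{\,n-1}(x)p_1(x)\pmod{g(x)}$, and then compute each gcd in one step after stripping the coprime factor $g^n(x)$ or $d^{\,n-1}(x)$. The underlying engine is identical (the recurrence telescopes modulo $d(x)$ or $g(x)$, and coprimality discards the accumulated factor), but your packaging buys two things: a single pair of congruences drives all three parts uniformly, and it supplies explicitly the reduction $\gcd(g(x),G_n(x))=\gcd(g(x),G_1(x))$ in part (3), a step the paper's proof leaves unaddressed (it only evaluates $\gcd(g(x),G_1(x))$ by cases). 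Your two-case treatment of the Lucas-type instance of part (1), via $2p_1(x)=p_0(x)d(x)$ and $|p_0(x)|\in\{1,2\}$, is exactly the content of the paper's remark that $G_1^{*}(x)=d(x)/\alpha$, made precise.
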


\begin{proof}
We prove part (1) by induction. Let $G_n$ be a GFP and  let $S(n)$ be the statement
\[\rho=\gcd(d(x), G_{2n+1}(x))\text{  for  }n\ge 1.\]
To prove $S(1)$ we suppose that $\gcd(d(x), G_{3}(x))=r$.
Thus, $r$ divides any linear combination of $d(x)$ and $G_3(x)$. Therefore, $r$ divides $G_3(x)-d(x) G_2(x)$. This and given that
$G_3(x)= d(x) G_{2}(x) + g(x) G_{1}(x)$ imply that $r\mid g(x)G_{1}(x)$. So, $r\mid \gcd(d(x),g(x)G_{1}(x))$. Since
$\gcd(d(x), g(x))=1$, we have $r\mid \rho$. It is easy to see that $\rho \mid r$. Thus, $r=\gcd(d(x),G_1(x))$. This proves $S(1)$.

We suppose that $S(n)$ is true for $n=k-1$. That is, $\gcd(d(x), G_{2k-1}(x)) = \rho$. To prove $S(k)$ we suppose that
$\gcd(d(x), G_{2k+1}(x))=r'$.  Thus, $r'$ divides any linear combination of $d(x)$ and $G_{2k+1}(x)$. Therefore,
$r'\mid \left(G_{2k+1}(x)-d(x) G_{2k}(x)\right)$. This and  $G_{2k+1}(x)= d(x) G_{2k}(x) + g(x) G_{2k-1}(x)$ imply that
$r'\mid g(x)G_{2k-1}(x)$. Therefore, $r'\mid \gcd(d(x),g(x)G_{2k-1}(x))$. Since \\
\noindent $\gcd(d(x), g(x))=1$, we have that
$r'$ divides the $\gcd(d(x),G_{2k-1}(x))$. By the inductive hypothesis we know that $\gcd(d(x), G_{2k-1}(x)) = \rho$. Thus,
$r'\mid \rho$.  It is easy to see that $\gcd(d(x),G_{2k+1}(x))$ divides $r'$. So, $r'=\gcd(G_1(x), d(x))$.

We show that depending on the type of sequence, it holds that $\gcd(d(x),G_1(x))$ is equal to $G_1$.
If $G_n(x)$ is a GFP of Fibonacci type, then by definition of $p(x)$ we have $G_1(x)=1$
(see comments after Binet formula \eqref{bineformulaunogeneral}).
Suppose that  $G_n(x)$ is a GFP of Lucas type. Recall that $2p_{1}(x)=p_{0}(x)d(x)$ and that $|p_{0}(x)|=1 \text { or } 2$.
The conclusion is straightforward since $G_1(x)=(a(x)+b(x))/\alpha=d(x)/\alpha$.

Proof of part (2). Let $S(n)$ be the statement
\[\rho=\gcd(d(x), G_{2n}(x))\text{  for  }n\ge 1.\]
To prove $S(2)$ we suppose that $\gcd(d(x), G_{4}(x))=r$.
Thus, $r$ divides any linear combination of $d(x)$ and $G_4(x)$. Therefore, $r$ divides $G_4(x)-d(x) G_3(x)$. This and given that
$G_4(x)= d(x) G_{3}(x) + g(x) G_{2}(x)$, imply that $r\mid g(x)G_{2}(x)$. Therefore, $r\mid \gcd(d(x),g(x)G_{2}(x))$. Since
$\gcd(d(x), g(x))=1$, we have $r\mid \rho$. It is easy to see that $\rho \mid r$. Thus, $r=\gcd(d(x),G_2(x))$. This proves $S(2)$.

We suppose that $S(n)$ is true for $n=k-1$. That is, $\gcd(d(x), G_{2k-2}(x)) = \rho$. To prove $S(k)$ we suppose that
$\gcd(d(x), G_{2k}(x))=r'$.  Thus, $r'$ divides any linear combination of $d(x)$ and $G_{2k}(x)$. So,
$r' \mid \left(G_{2k}(x)-d(x) G_{2k-1}(x)\right)$. This and $G_{2k}(x)= d(x) G_{2k-1}(x) + g(x) G_{2k-2}(x)$ imply that
$r'\mid g(x)G_{2k-2}(x)$. Therefore, $r'\mid \gcd(d(x),g(x)G_{2k-2}(x))$. Since $\gcd(d(x), g(x))=1$, we have that
$r'$ divides  the $\gcd(d(x),G_{2k-2}(x))$. From the inductive hypothesis we know that $\gcd(d(x), G_{2k-2}(x)) = \rho$. Thus,
$r'\mid \rho$.  It is easy to see that $\gcd(d(x),G_{2k}(x))$ divides $r'$. Therefore, $r'=\gcd(G_2(x), d(x))$.

We observe that for a GFP of Fibonacci type it holds that $G_2^{\prime}(x)=a(x)+b(x)=d(x)$. So, it is clear that
$\gcd(G_{2n}^{\prime}(x),d(x))=d(x)$. For a GFP of Lucas type it holds that $G_0^{*}(x)$ is a non-zero constant.
Since $G_2^{*}(x)=d(x)G_1^{*}(x)+g(x)G_{0}^{*}(x)$, and $\gcd(d(x), g(x))=1$, we have
$$\gcd(d(x),G_2^{*}(x))=\gcd(d(x),d(x)G_1^{*}(x)+g(x)G_{0}^{*}(x))=\gcd(d(x),g(x)G_0^{*}(x))=1.$$

Proof of part (3). We prove that  $\gcd(g(x), G_1(x))=1$ by cases. If $G_1(x)$ is of the Fibonacci type,
the conclusion is straightforward.
As a second case we suppose that $G_1(x)$ is of the Lucas type. That is, $G_1(x)$
satisfies the Binet formula (\ref{bineformulados}). Therefore, we have
\[\gcd(g(x), G_1(x))=\gcd(g(x), L_1(x)) = \gcd(g(x), [a+b]/\alpha)=\gcd(g(x), d(x)/\alpha).\]
Since $\gcd(g(x), d(x))=1$, we have  $\gcd(g(x), d(x)/\alpha)=1$.
This completes the proof.
\end{proof}

\begin{lemma} \label{gcdforproposition4}  If $\{G_{n}(x)\}$ is a GFP sequence,
then for every positive integer $n$ the following holds:
\begin{enumerate}[(1)]
   \item $\gcd(G_{n}(x), G_{n+1}(x))$ divides $\gcd(G_{n}(x),g(x)G_{n-1}(x))=\gcd(G_{n}(x),G_{n-1}(x))$,
   \item $\gcd(G_{n}(x), G_{n+2}(x))$ divides $\gcd(G_{n}(x), d(x)G_{n+1}(x))$.
\end{enumerate}
\end{lemma}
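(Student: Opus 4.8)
The plan is to derive both divisibility relations directly from the defining recurrence \eqref{Fibonacci;general}, by rewriting the appropriate three-term relation so that one of the composite factors $g(x)G_{n-1}(x)$ or $d(x)G_{n+1}(x)$ appears as a $\mathbb{Z}[x]$-linear combination of two terms whose gcd is under consideration. Since every assertion is a statement about divisibility in the integral domain $\mathbb{Z}[x]$, the arguments will rely only on the elementary fact that a common divisor of two polynomials divides any $\mathbb{Z}[x]$-linear combination of them, together with the coprimality $\gcd(g(x),G_n(x))=1$ already established in Lemma \ref{gcdlemmas}(3).

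For part (1), I would start from the recurrence $G_{n+1}(x)=d(x)G_n(x)+g(x)G_{n-1}(x)$, valid for $n\ge 1$, and rearrange it as $g(x)G_{n-1}(x)=G_{n+1}(x)-d(x)G_n(x)$. Writing $h(x)=\gcd(G_n(x),G_{n+1}(x))$, the right-hand side shows $h(x)\mid g(x)G_{n-1}(x)$; since also $h(x)\mid G_n(x)$, we obtain $h(x)\mid\gcd(G_n(x),g(x)G_{n-1}(x))$, which is the required divisibility. The remaining equality $\gcd(G_n(x),g(x)G_{n-1}(x))=\gcd(G_n(x),G_{n-1}(x))$ is then the standard cancellation $\gcd(a,bc)=\gcd(a,c)$, valid whenever $\gcd(a,b)=1$; here $a=G_n(x)$, $b=g(x)$, $c=G_{n-1}(x)$, and the needed hypothesis $\gcd(g(x),G_n(x))=1$ is exactly Lemma \ref{gcdlemmas}(3).

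Part (2) runs in parallel, this time using $G_{n+2}(x)=d(x)G_{n+1}(x)+g(x)G_n(x)$ rewritten as $d(x)G_{n+1}(x)=G_{n+2}(x)-g(x)G_n(x)$. Setting $k(x)=\gcd(G_n(x),G_{n+2}(x))$, the right-hand side gives $k(x)\mid d(x)G_{n+1}(x)$; combined with $k(x)\mid G_n(x)$ this yields $k(x)\mid\gcd(G_n(x),d(x)G_{n+1}(x))$, as claimed. No cancellation step is required here, so part (2) is strictly easier than part (1).

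I expect no serious obstacle. The only ingredient beyond bare manipulation of the recurrence is the cancellation identity used in part (1), and that is supplied entirely by the coprimality of $g(x)$ and $G_n(x)$ from Lemma \ref{gcdlemmas}(3). The two points that merely need care are bookkeeping ones: ensuring $n\ge 1$ so that $G_{n-1}(x)$ is defined and the recurrence applies, and remembering that all gcd's in $\mathbb{Z}[x]$ are taken up to units, so the stated equalities and divisibilities are assertions about associate classes rather than literal equalities of polynomials.
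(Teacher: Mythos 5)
Your proof is correct and follows essentially the same route as the paper: both take a common divisor of the relevant pair, observe that it divides the linear combination $G_{n+1}(x)-d(x)G_n(x)=g(x)G_{n-1}(x)$ (respectively $G_{n+2}(x)-g(x)G_n(x)=d(x)G_{n+1}(x)$) coming from the recurrence, and finish part (1) by the cancellation $\gcd(G_n(x),g(x)G_{n-1}(x))=\gcd(G_n(x),G_{n-1}(x))$ justified by Lemma \ref{gcdlemmas}(3). The paper omits part (2) as ``similar,'' and your explicit argument for it is exactly the intended one.
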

\begin{proof} We prove part (1), the proof of part (2) is similar and we omit it.  If $r$ is equal to $ \gcd(G_{n}(x), G_{n+1}(x))$, then  $r$
divides any linear combination of $G_{n}(x)$ and $G_{n+1}(x)$. Therefore, $r\mid (G_{n+1}(x)-d(x) G_{n}(x))$. This and the recursive
definition of $G_{n+1}(x)$ imply that $r\mid g(x)G_{n-1}(x)$. Therefore, $r\mid \gcd(g(x)G_{n-1}(x),G_{n}(x))$. Since $\gcd(g(x),G_{n}(x))=1$, we have  $$\gcd(g(x)G_{n-1}(x),G_{n}(x))=\gcd(G_{n-1}(x),G_{n}(x)).$$
This completes the proof.
\end{proof}

Note that Proposition \ref{gcddistance1;2} part (2) when $m=n+1$ is \cite[Theorem 4]{HoggattLong}.

\begin{proposition} \label{gcddistance1;2} Let $m$ and $n$ be positive integers with $0<|m-n|\le 2$.

\begin{enumerate}[(1)]
 \item If $G_{t}^{*}(x)$  is a  GFP of Lucas type, then
 \[
 \gcd(G_m^{*}(x),G_n^{*}(x))=
 \begin{cases}
         G_{1}^{*}(x), & \mbox{if $m$ and $n$ are both odd;} \\
         1, & \mbox{otherwise. }
\end{cases}
 \]

   \item If $G_{t}^{\prime}(x)$  is a  GFP of Fibonacci type, then
 \[
 \gcd(G_m^{\prime}(x),G_n^{\prime}(x))=
 \begin{cases}
        G_{2}^{\prime}(x), & \mbox{if $m$ and $n$ are both even;} \\
         1, & \mbox{otherwise. }
\end{cases}
 \]

\end{enumerate}

\end{proposition}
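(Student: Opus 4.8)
The plan is to reduce every case to a $\gcd$ with $d(x)$, which Lemma~\ref{gcdlemmas} already controls completely. By symmetry of the $\gcd$ I may assume $m>n$, so that $m-n\in\{1,2\}$. Observe the parity bookkeeping that drives the whole statement: $|m-n|=1$ forces $m,n$ to have opposite parity (so neither ``both odd'' nor ``both even'' can occur, and the answer should fall in the ``otherwise'' branch), whereas $|m-n|=2$ forces $m,n$ to share parity. Thus the two distance cases match the two branches of each displayed formula, and I will treat them in that order, keeping the argument type-agnostic for as long as possible and specializing only at the end.

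First I would settle the case $|m-n|=1$, that is $\gcd(G_n(x),G_{n+1}(x))$. Lemma~\ref{gcdforproposition4}(1) gives $\gcd(G_n,G_{n+1})\mid\gcd(G_{n-1},G_n)$ for every $n\ge 1$, so descending this divisibility chain yields $\gcd(G_n,G_{n+1})\mid\gcd(G_0,G_1)$. The base value is a unit in both types: for Lucas type $\gcd(G_0^{*},G_1^{*})=\gcd(p_0(x),p_1(x))=1$ by the standing hypotheses, and for Fibonacci type $\gcd(G_0',G_1')=\gcd(0,1)=1$. Hence consecutive terms are always coprime, which is exactly the ``otherwise'' value $1$ appearing in both parts (the opposite-parity situation).

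Next I would handle $|m-n|=2$, that is $\gcd(G_n,G_{n+2})$. Lemma~\ref{gcdforproposition4}(2) gives $\gcd(G_n,G_{n+2})\mid\gcd(G_n,d(x)G_{n+1})$, and since I have just shown $\gcd(G_n,G_{n+1})=1$, the standard $\mathbb{Z}[x]$ identity $\gcd(a,bc)=\gcd(a,b)$ when $\gcd(a,c)=1$ collapses this to
\[\gcd(G_n,G_{n+2})\mid\gcd(G_n,d(x)).\]
Now Lemma~\ref{gcdlemmas} evaluates the right-hand side by parity and type: if $n$ is odd then $\gcd(d,G_n)=G_1(x)$, which is $G_1^{*}(x)$ in the Lucas case and is $1$ in the Fibonacci case (where $G_1'=1$); if $n$ is even then $\gcd(d,G_n^{*})=1$ in the Lucas case while $\gcd(d,G_n')=d(x)=G_2'(x)$ in the Fibonacci case. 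Reading off the four same-parity combinations reproduces exactly the claimed bounds: $G_1^{*}$ (Lucas, both odd), $G_2'=d$ (Fibonacci, both even), and $1$ in the two remaining same-parity situations.

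The one genuine obstacle is promoting these ``divides'' relations to equalities, i.e.\ supplying the reverse divisibility in the two nontrivial branches. This is handed to me for free by the same lemma: the equality $\gcd(d,G_n)=G_1^{*}$ already says $G_1^{*}\mid G_n^{*}$ for every odd $n$, so for the two odd indices $n,n+2$ I get $G_1^{*}\mid\gcd(G_n^{*},G_{n+2}^{*})$; likewise $\gcd(d,G_n')=d$ says $d\mid G_n'$ for every even $n$, giving $G_2'=d\mid\gcd(G_n',G_{n+2}')$. Combining each reverse divisibility with its matching upper bound forces equality up to the unit ambiguity of the $\gcd$ in $\mathbb{Z}[x]$, completing both parts.
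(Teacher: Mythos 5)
Your proposal is correct and takes essentially the same approach as the paper: the same two lemmas (Lemma \ref{gcdforproposition4} and Lemma \ref{gcdlemmas}) drive the same parity-and-distance case analysis, and your divisibility descent for consecutive indices is just an unrolled version of the paper's induction. The only organizational difference is that in the distance-two cases the paper computes exact equalities case by case (using the recurrence directly for consecutive odd indices), while you uniformly establish the upper bound $\gcd(G_{n}(x),G_{n+2}(x)) \mid \gcd(G_{n}(x),d(x))$ and then close with the reverse divisibility read off from Lemma \ref{gcdlemmas}, a harmless variation.
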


\begin{proof}   We prove part (1) using several cases based on the values of $m$ and $n$. The proof of part (2) is similar and we omit it.
We first provide the proof for the
case when $m$ and $n$ are consecutive integers using induction on $m$. Let $S(m)$ be the statement
\[\gcd(G_{m}^{*}(x), G_{m+1}^{*}(x))=1\text{  for } m\ge 1.\]
We prove $S(1)$. From Lemma \ref{gcdforproposition4} part (1) we know that
\begin{equation}\label{formula1:for:gcddistance1;2}
\gcd(G_{1}^{*}(x),G_{2}^{*}(x)) \text{ divides } \gcd(G_{1}^{*}(x), g(x)G_{0}^{*}(x)).
\end{equation}
Since
$$\gcd(G_{0}^{*}(x),G_{1}^{*}(x))=\gcd(p_{0}(x),p_{1}(x))=1,$$
we have
$$\gcd(G_{1}^{*}(x),g(x)G_{0}^{*}(x))=\gcd(G_{1}^{*}(x),g(x)).$$
This, (\ref{formula1:for:gcddistance1;2}), and Lemma \ref{gcdlemmas} part (3) imply that $\gcd(G_{1}^{*}(x),G_{2}^{*}(x))=1$.

We suppose that $S(m)$ is true for $m=k-1$. Thus, $\gcd(G_{k-1}^{*}(x),G_{k}^{*}(x))=1$. We prove that $S(k)$ is true.
From Lemma \ref{gcdforproposition4} part (1) we know that
\begin{equation}\label{formula2:for:gcddistance1;2}
\gcd(G_{k}^{*}(x),G_{k+1}^{*}(x)) \text{ divides } \gcd(G_{k}^{*}(x), g(x)G_{k-1}^{*}(x)).
\end{equation}
From Lemma \ref{gcdlemmas} part (3) we know that $\gcd(G_{k}^{*}(x),g(x))=1$.
Therefore,
$$\gcd(G_{k}^{*}(x), g(x)G_{k-1}^{*}(x))= \gcd(G_{k}^{*}(x), G_{k-1}^{*}(x)).$$ This, (\ref{formula2:for:gcddistance1;2}),
 and the inductive hypothesis imply that $\gcd(G_{k}^{*}(x),G_{k+1}^{*}(x)=1$.

We prove the proposition for consecutive even integers (this proof is actually
a direct consequence of the previous proof).
From Lemma \ref{gcdforproposition4} part (2), we have that
$\gcd(G_{2k}^{*}(x), G_{2k+2}^{*}(x))$ divides $\gcd(G_{2k}^{*}(x),d(x)G_{2k+1}^{*}(x))$.
From Lemma \ref{gcdlemmas} part (2) we know that $\gcd(d(x),G_{2k}^{*}(x))=1$.
This implies that
$$\gcd(G_{2k}^{*}(x),d(x)G_{2k+1}^{*}(x))= \gcd(G_{2k}^{*}(x),G_{2k+1}^{*}(x)).$$
From the previous part  of this proof --that is, the case when $m$ and $n$ are
consecutive integers-- we conclude that $\gcd(G_{2k}^{*}(x),G_{2k+1}^{*}(x))=1$.
This proves that
$$\gcd(G_{2k}^{*}(x), G_{2k+2}^{*}(x))=1.$$

Finally we prove the proposition for consecutive odd integers. From the recursive definition of GFPs we have
$\gcd(G_{2k+1}^{*}(x),G_{2k-1}^{*}(x))$ is equal to
$$
\gcd(d(x)G_{2k}^{*}(x)+g(x)G_{2k-1}^{*}(x),G_{2k-1}^{*}(x))=\gcd(d(x)G_{2k}^{*}(x),G_{2k-1}^{*}(x)).
$$
From the first case in this proof we know that   $\gcd(G_{2k}^{*}(x),G_{2k-1}^{*}(x))=1$. This implies that
$\gcd(G_{2k+1}^{*}(x),G_{2k-1}^{*}(x))=\gcd(d(x),G_{2k-1}^{*}(x))$.
This and Lemma \ref{gcdlemmas} imply that
\[\gcd(G_{2k+1}^{*}(x),G_{2k-1}^{*}(x))=\gcd(d(x),G_{2k-1}^{*}(x))=G_1^{*}(x).\]
This completes the proof of part (1).
\end{proof}

\section{Identities and other properties of generalized Fibonacci polynomials} \label{mainresults:section}

In this section we present some identities that the GFPs satisfy. These identities are required for
the proofs of certain divisibility  properties of the GFPs. The results in this section are proved using
the Binet formulas (see Section \ref{General:Fibonacci:Polynomial}).  Proposition \ref{divisity:Hogat:property1} part (1) is a variation of a
result proved in \cite{HoggattLong}, similarly Proposition \ref{divisity:property:fibonacci} is a variation of a divisibility property
proved by them in the same paper. A collection of this type of identities for GFPs can be found in \cite{florezMcanallyMuk}.

In 1963 Ruggles \cite{koshy} proved that $F_{n+m}= F_{n} L_{m}-(-1)^m F_{n-m}$. Proposition \ref{divisity:Hogat:property1}
parts (2) and (3) is a generalization of this numerical identity. In 1972 Hansen \cite{hansen} proved that
$ 5 F_{m +n - 1}=L_{m} L_{n} + L_{m - 1} L_{n -1} $. Proposition \ref{divisity:Hogat:property2} part (1) is a
generalization of this numerical identity.

\begin{proposition} \label{divisity:Hogat:property1}
If $\{ G_{n}^{*}(x)\}$ and $\{ G_{n}^{\prime}(x)\}$ are equivalent GFPs sequences, then

\begin{enumerate}[(1)]
  \item $G_{m+n+1}^{\prime}(x)= G_{m+1}^{\prime}(x)G_{n+1}^{\prime}(x)+g(x)G_{m}^{\prime}(x)G_{n}^{\prime}(x)$,
  \item if $n\ge m$, then $G_{n+m}^{\prime}(x)= \alpha G_{n}^{\prime}(x) G_{m}^{*}(x)-(-g(x))^{m} G_{n-m}^{\prime}(x)$,
  \item if $n\ge m$, then $G_{n+m}^{\prime}(x)= \alpha G_{m}^{\prime}(x)G_{n}^{*}(x) +(-g(x))^{m} G_{n-m}^{\prime}(x)$.
\end{enumerate}
\end{proposition}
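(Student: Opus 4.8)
The three statements are polynomial identities, so the plan is to prove all of them by substituting the Binet formulas from Section \ref{General:Fibonacci:Polynomial} and simplifying. The hypothesis that $\{G_n^{*}(x)\}$ and $\{G_n^{'}(x)\}$ are \emph{equivalent} is what makes this feasible: it guarantees that both sequences are built from the same $a$ and $b$, so I may write $G_n^{*}(x)=(a^n+b^n)/\alpha$ from (\ref{bineformulados}) and $G_n^{'}(x)=(a^n-b^n)/(a-b)$ from (\ref{bineformulauno}) with one common pair $a,b$ throughout each identity. I will also use repeatedly the relation $ab=-g(x)$ noted after (\ref{bineformulauno}), which gives $(-g(x))^{m}=(ab)^{m}=a^{m}b^{m}$.

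For part (1) I would put the right-hand side over the common denominator $(a-b)^2$. Writing $R_k$ for $(a^k-b^k)/(a-b)$, the numerator of $R_{m+1}R_{n+1}$ expands to $a^{m+n+2}-a^{m+1}b^{n+1}-a^{n+1}b^{m+1}+b^{m+n+2}$, while the numerator of $g(x)R_mR_n$ is $-ab$ times $(a^m-b^m)(a^n-b^n)$, whose expansion produces exactly $+a^{m+1}b^{n+1}+a^{n+1}b^{m+1}$ together with $-a^{m+n+1}b-ab^{m+n+1}$. The two mixed terms cancel, and the surviving four terms factor as $(a-b)(a^{m+n+1}-b^{m+n+1})$; dividing by $(a-b)^2$ returns $R_{m+n+1}=G_{m+n+1}^{'}(x)$.

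Parts (2) and (3) are a matched pair and rest on one observation. Expanding $\alpha\, G_n^{'}(x)G_m^{*}(x)=(a^n-b^n)(a^m+b^m)/(a-b)$ gives numerator $a^{n+m}-b^{n+m}+a^nb^m-a^mb^n$, and the only obstruction to recognizing $R_{n+m}$ is the cross term $a^nb^m-a^mb^n$. The key step is to note that $(-g(x))^{m}G_{n-m}^{'}(x)=a^mb^m\,(a^{n-m}-b^{n-m})/(a-b)=(a^nb^m-a^mb^n)/(a-b)$, so subtracting it removes precisely that cross term and yields part (2). Part (3) is the mirror computation: $\alpha\, G_m^{'}(x)G_n^{*}(x)$ carries the cross term with the opposite sign, so this time one \emph{adds} $(-g(x))^{m}G_{n-m}^{'}(x)$. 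Here the hypothesis $n\ge m$ is used only to ensure $n-m\ge 0$, so that $G_{n-m}^{'}(x)$ is a legitimate term of the sequence.

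The computations are routine, so there is no serious conceptual obstacle; the one place to be careful is the sign and exponent bookkeeping, in particular the identity $(-g(x))^{m}=a^mb^m$ and the verification that the cross term $a^nb^m-a^mb^n$ matches $(a-b)(-g(x))^{m}G_{n-m}^{'}(x)$ with the correct sign in each of parts (2) and (3).
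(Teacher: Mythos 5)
Your proposal is correct and follows essentially the same route as the paper's own proof: direct substitution of the Binet formulas (\ref{bineformulados}) and (\ref{bineformulauno}) with the shared $a,b$, followed by cancellation of the cross terms via $ab=-g(x)$. The only (harmless) difference is organizational --- you cancel the mixed terms directly and keep $\alpha$ general by noting it is absorbed into $\alpha G^{*}$, whereas the paper factors through $a^{2}+g(x)=a(a-b)$, $b^{2}+g(x)=-b(a-b)$ in part (1) and assumes $\alpha=1$ in parts (2)--(3).
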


\begin{proof} We prove part (1). We know that $G_{n}^{\prime}(x)$
satisfies the Binet formula (\ref{bineformulauno}). That is, $R_n(x)= (a^{n}-b^{n})/(a-b)$ .
(Recall that we use $a:=a(x)$ and $b:=b(x)$.)

Therefore, $G_{m+1}^{\prime}(x)G_{n+1}^{\prime}(x)+g(x)G_{m}^{\prime}(x)G_{n}^{\prime}(x)$ is equal to,
\[\left[(a^{m+1}-b^{m+1})(a^{n+1}-b^{n+1})+g(x)(a^{m}-b^{m})(a^{n}-b^{n})\right]/(a-b)^{2}.\]
Simplifying and factoring terms we obtain:
\[\left[\left(a^{n+m}(a^2+g(x))+b^{n+m}(b^2+g(x))\right)-(a^nb^m+b^na^m)(ab+g(x))\right]/(a-b)^{2}.\]
Next, since $ab=-g(x)$, we see that the above expression is equal to
\[\left[a^{n+m}(a^2+g(x))+b^{n+m}(b^2+g(x))\right]/(a-b)^{2}.\]
This, with the facts that $(a^2+g(x))=a(a-b)$ and $(b^2+g(x))=-b(a-b)$, shows that the above expression is equal to
\[\left(a^{n+m+1}-b^{n+m+1}\right) /(a-b)=R_{n+m+1}(x).\]
This completes the proof of part (1).

We now prove part (2), the proof of part (3) is identical and we omit it. Suppose that $G_{k}^{*}(x)$ is equivalent to
$G_{k}^{\prime}(x)$ and that $G_{k}^{*}(x)$ is of the Lucas type for all $k$. For simplicity let us suppose that $\alpha=1$
(the proof when $\alpha\neq 1$ is similar, so we omit it).
Using the Binet formulas (\ref{bineformulados}) and (\ref{bineformulauno}) we have that
$G_{n}^{\prime}(x) G_{m}^{*}(x)-(-g(x))^{m} G_{n-m}^{\prime}(x)$ is equal to
\[\displaystyle{\frac{(a^{n}-b^{n})(a^{m}+b^{m})-(-g(x))^{m}(a^{n-m}-b^{n-m})}{(a-b)}}.\]
After performing the indicated multiplication and simplifying we find that this expression is equal to
\[\left[\frac{a^{n+m}-b^{n+m}}{a-b} \right]+ \left[\frac{a^{n}b^{m}-a^{m}b^{n}-(-g(x))^m a^{n-m}+(-g(x))^mb^{n-m}}{a-b}\right].\]
Since $-g(x)=ab$, it is easy to see that the expression in the right bracket is equal to zero. Thus, $(a^{n+m}-b^{n+m})/(a-b)= G_{n+m}^{\prime}(x)$.
This completes the proof of part (2).
\end{proof}

\begin{proposition} \label{divisity:Hogat:property2}
Let $\{ G_{n}^{*}(x)\}$ and $\{ G_{n}^{\prime}(x)\}$ be equivalent GFPs sequences. If $m\ge 0$ and $n\ge 0$, then
\begin{enumerate}[(1)]
\item $(a-b)^2G^{\prime}_{m+n+1}(x) = \alpha^2 G_{m+1}^{*}(x)G_{n+1}^{*}(x)+\alpha^2 g(x)G_{m}^{*}(x)G_{n}^{*}(x),$
\item $G_{m+n+2}^{*}(x) = \alpha G_{m+1}^{*}(x)G_{n+1}^{*}(x)+g(x)[\alpha G_{m}^{*}(x)G_{n}^{*}(x)-G_{m+n}^{*}(x)].$
\end{enumerate}
\end{proposition}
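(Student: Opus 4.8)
The plan is to establish both identities by substituting the Binet formulas and simplifying, in the same spirit as the proof of Proposition~\ref{divisity:Hogat:property1}. Because the two sequences are equivalent they share the same $a$ and $b$, so I may freely combine the Lucas-type relation $\alpha G_k^{*}(x)=a^{k}+b^{k}$ with the Fibonacci-type relation $(a-b)G_k^{'}(x)=a^{k}-b^{k}$. The only algebraic facts I will need beyond these are $ab=-g(x)$ and $a+b=d(x)$.

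For part (1), I would multiply the Binet expressions through to eliminate $\alpha$ and $a-b$: the left-hand side becomes $(a-b)(a^{m+n+1}-b^{m+n+1})$, while the right-hand side becomes
\[(a^{m+1}+b^{m+1})(a^{n+1}+b^{n+1})+g(x)(a^{m}+b^{m})(a^{n}+b^{n}).\]
Expanding the first product produces the four terms $a^{m+n+2}+a^{m+1}b^{n+1}+a^{n+1}b^{m+1}+b^{m+n+2}$, and expanding the second, after replacing $g(x)$ by $-ab$, produces $-a^{m+n+1}b-a^{m+1}b^{n+1}-a^{n+1}b^{m+1}-ab^{m+n+1}$. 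The two mixed terms $a^{m+1}b^{n+1}$ and $a^{n+1}b^{m+1}$ cancel, and the remaining four terms regroup as $a^{m+n+1}(a-b)-b^{m+n+1}(a-b)$, which is exactly $(a-b)(a^{m+n+1}-b^{m+n+1})$. This matches the left-hand side and settles part (1).

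For part (2), all four terms are of Lucas type, so I would multiply both sides by $\alpha$ and use $\alpha G_k^{*}(x)=a^{k}+b^{k}$ throughout; the left-hand side becomes $a^{m+n+2}+b^{m+n+2}$. On the right-hand side I expand $(a^{m+1}+b^{m+1})(a^{n+1}+b^{n+1})$ together with $g(x)(a^{m}+b^{m})(a^{n}+b^{n})$ and $-g(x)(a^{m+n}+b^{m+n})$, again writing $g(x)=-ab$. The two mixed products $a^{m+1}b^{n+1}$ and $a^{n+1}b^{m+1}$ cancel as before, and the terms $\pm a^{m+n+1}b$ and $\pm ab^{m+n+1}$ arising from the last two sums cancel against one another, leaving precisely $a^{m+n+2}+b^{m+n+2}$. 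This settles part (2).

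Both computations are routine; the only point requiring care is the bookkeeping of the six cross terms and verifying that the substitution $g(x)=-ab$ makes them cancel in pairs. Since this is the same mechanism that drives the proof of Proposition~\ref{divisity:Hogat:property1}, I expect no genuine obstacle beyond careful expansion and regrouping.
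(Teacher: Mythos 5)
Your proposal is correct, and for part (1) it is essentially the paper's own argument: both expand the Binet forms of the right-hand side and use $ab=-g(x)$ to kill the cross terms (the paper organizes the computation by factoring $a^2+g(x)=a(a-b)$ and $b^2+g(x)=-b(a-b)$ instead of fully expanding and regrouping, but the cancellation mechanism is identical). Where you genuinely diverge is part (2): you prove it by a second, independent Binet expansion carried out entirely inside the Lucas-type sequence, whereas the paper deduces part (2) from part (1). Specifically, the paper expands $(a-b)\left[a^{m+n+1}-b^{m+n+1}\right]$ and uses $ab=-g(x)$ to obtain the intermediate identity $(a-b)^2G^{'}_{m+n+1}(x)=G_{m+n+2}^{*}(x)+g(x)G_{m+n}^{*}(x)$, then substitutes part (1) into the left-hand side; this is more economical (no second bookkeeping of six cross terms) and makes visible a bridge identity between the equivalent sequences $\{G_n^{'}\}$ and $\{G_n^{*}\}$, which is the kind of link the paper exploits again later (e.g., in Proposition \ref{gcd:fibonaccilucas:samegcdlucaslucas}). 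Your route buys self-containedness: your part (2) never mentions the Fibonacci-type sequence at all, and each part can be read independently. A further small advantage on your side is rigor with respect to $\alpha$: by multiplying through by the appropriate powers of $\alpha$ so that every factor becomes $a^k+b^k$, you handle general $\alpha$ uniformly, whereas the paper sets $\alpha=1$ and only asserts that the case $\alpha\neq 1$ is similar.
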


\begin{proof} In this proof we use $\alpha=1$, the proof when $\alpha\neq 1$ is similar, so we omit it.
(Recall, once again, that we use $a:=a(x)$ and $b:=b(x)$.)

Proof of part (1).  Since  $ G_{n}^{*}(x)$  is a GFP of the Lucas type, we have that
$G_{n}^{*}(x)$ satisfies the Binet formula $L_n(x)= (a^{n}+b^{n})/\alpha$ given in (\ref{bineformulados}).
Therefore,
\begin{equation}\label{divisity:Hogat:property2:formula1}
G_{m+1}^{*}(x)G_{n+1}^{*}(x)+g(x)G_{m}^{*}(x)G_{n}^{*}(x)
\end{equation}
 is equal to,
\[\left[a^{n+1}+b^{n+1}\right]\left[a^{m+1}+b^{m+1}\right]+g(x)\left[a^{n}+b^{n}\right]\left[a^{m}+b^{m}\right].\]
Simplifying and factoring we see that this expression is equal to
\[a^{m+n}\left[a^2+g(x)\right]+b^{m+n}\left[b^2+g(x)\right]+(ab+g(x))\left[a^mb^n+a^nb^m\right].\]
Since
\[ ab=-g(x),\; a^2+g(x)=a(a-b),\text{ and } b^2+g(x)=-b(a-b),\]
the expression in (\ref{divisity:Hogat:property2:formula1}) is equal to $(a-b)(a^{m+n+1}-b^{m+n+1})$.
We recall that $G_{m+n+1}^{\prime}(x)$ is equivalent to $G_{m+n+1}^{*}(x)$. Thus, $G^{\prime}_{m+n+1}(x) =(a^{m+n+1}-b^{m+n+1})/(a-b)$.
Therefore,  $(a-b)^2G^{\prime}_{m+n+1}(x) =(a-b)\left[a^{m+n+1}-b^{m+n+1}\right]$. This completes the proof of part (1).

Proof of part (2). From the proof of part (1)  we know that $$(a-b)^2G^{\prime}_{m+n+1}(x)=(a-b)[a^{m+n+1}-b^{m+n+1}].$$
Simplifying the right side of the previous equality we have
\[ (a-b)^2G^{\prime}_{m+n+1}(x)=a^{m+n+2}-ba^{m+n+1}- ab^{m+n+1} + b^{m+n+2}.\]
So, $(a-b)^2G^{\prime}_{m+n+1}(x)=a^{m+n+2}+ b^{m+n+2} -ab[a^{m+n}+ b^{m+n}]$. We recall that $ab=-g(x)$. Thus,
\[(a-b)^2G^{\prime}_{m+n+1}(x)=a^{m+n+2}+ b^{m+n+2} + g(x)[a^{m+n}+ b^{m+n}].\]
This and the Binet formula (\ref{bineformulados}) imply that
\[(a-b)^2G^{\prime}_{m+n+1}(x)=G_{m+n+2}^{*}(x) + g(x)G_{m+n}^{*}(x).\]
So, the proof follows from part (1) of this proposition.
\end{proof}

\begin{proposition} \label{Dic2}
Let $\{ G_{n}^{*}(x)\}$ be a GFP sequence of the Lucas type.
If $m$, $n$, $r$, and $q$ are positive integers, then
\begin{enumerate}[(1)]
\item if $m\leq n$, then $G_{m+n}^{*}(x)=\alpha G_{m}^{*}(x)G_{n}^{*}(x)+(-1)^{m+1}(g(x))^mG_{n-m}^{*}(x)$.
\item If $r<m$, then there is a polynomial $T(x)$ such that
\[ G_{mq+r}^{*}(x)=
    \left\{
         \begin{array}{ll}
            G_{m}^{*}(x)T(x)+(-1)^{w+t}(g(x))^{w} G_{m-r}^{*}(x),       & \hbox{ if } $q$ \hbox{ is odd;} \\
            G_{m}^{*}(x)T(x)+(-1)^{(m+1)t}(g(x))^{mt} G_{r}^{*}(x),     & \hbox{ if }  $q$ \hbox{ is even}
        \end{array}
    \right.
\]
where $t=\left\lceil \frac{q}{2} \right\rceil$ and $w=(t-1)m+r$.
\item If $n>1$, then there is a polynomial $T_n(x)$ such that
\[G_{2^nr}^{*}(x)= G_{r}^{*}(x)T_n(x)+(2/\alpha)(g(x))^{2^{n-1}r}.
\]

\end{enumerate}
\end{proposition}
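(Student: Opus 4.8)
The plan is to handle the three parts in order, since part~(1) is a direct Binet-formula computation and parts~(2) and~(3) both follow by iterating it. For part~(1) I would substitute the Lucas-type Binet formula $G_k^{*}(x)=(a^k+b^k)/\alpha$ from \eqref{bineformulados} into $\alpha G_m^{*}(x)G_n^{*}(x)$, expand $(a^m+b^m)(a^n+b^n)=a^{m+n}+b^{m+n}+a^mb^n+a^nb^m$, and read off
\[ \alpha G_m^{*}(x)G_n^{*}(x)-G_{m+n}^{*}(x)=\tfrac{1}{\alpha}\bigl(a^mb^n+a^nb^m\bigr). \]
Since $m\le n$ and $ab=-g(x)$, the cross term factors as $(ab)^m(a^{n-m}+b^{n-m})=(-g(x))^m\,\alpha G_{n-m}^{*}(x)$, so the difference equals $(-g(x))^m G_{n-m}^{*}(x)=(-1)^m(g(x))^m G_{n-m}^{*}(x)$; moving it across gives the claimed $(-1)^{m+1}(g(x))^m G_{n-m}^{*}(x)$. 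The case $\alpha\ne1$ runs verbatim, so I would only remark on it.

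For part~(2) the engine is the congruence obtained from part~(1) with indices $m$ and $m+s$, namely $G_{2m+s}^{*}(x)\equiv(-1)^{m+1}(g(x))^mG_s^{*}(x)\pmod{G_m^{*}(x)}$: raising the index by $2m$ multiplies the residue modulo $G_m^{*}$ by $(-1)^{m+1}(g(x))^m$. I would then induct on $q$ in steps of two. For even $q=2t$, writing $mq+r=2mt+r$ and iterating $t$ times yields residue $\bigl((-1)^{m+1}(g(x))^m\bigr)^tG_r^{*}(x)=(-1)^{(m+1)t}(g(x))^{mt}G_r^{*}(x)$. For odd $q=2t-1$, writing $mq+r=2m(t-1)+(m+r)$ with $m\le m+r<2m$, I would iterate $t-1$ times and then apply part~(1) once more with indices $r\le m$ to reduce $G_{m+r}^{*}$ to $(-1)^{r+1}(g(x))^rG_{m-r}^{*}$ modulo $G_m^{*}$; collecting powers of $-1$ and $g(x)$ gives exponents $(m+1)(t-1)+r+1=m(t-1)+t+r$ and $m(t-1)+r$, exactly as stated with $t=\lceil q/2\rceil$. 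In each case $T(x)$ is the accumulated quotient, which the recursion produces explicitly as a polynomial.

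For part~(3) I would induct on $n$ using the doubling case $m=n$ of part~(1),
\[ G_{2s}^{*}(x)=\alpha\,(G_s^{*}(x))^2+(-1)^{s+1}(g(x))^sG_0^{*}(x), \]
with $G_0^{*}(x)=p_0(x)=2/\alpha$. For $n\ge1$ the exponent $s=2^{n-1}r$ is even, so the sign is $-1$ and $G_{2^{n+1}r}^{*}(x)=\alpha(G_{2^nr}^{*}(x))^2-(2/\alpha)(g(x))^{2^nr}$. The inductive hypothesis $G_{2^nr}^{*}(x)\equiv(2/\alpha)(g(x))^{2^{n-1}r}\pmod{G_r^{*}(x)}$ makes the square reduce to $(2/\alpha)^2(g(x))^{2^nr}$ modulo $G_r^{*}$ (the sign disappears upon squaring), so $\alpha(G_{2^nr}^{*})^2\equiv(4/\alpha)(g(x))^{2^nr}$, and the constants combine as $(4/\alpha)-(2/\alpha)=2/\alpha$, the claimed residue for $n+1$. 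The base case $n=2$ is the same computation starting from the $n=1$ reduction $G_{2r}^{*}\equiv(-1)^{r+1}(2/\alpha)(g(x))^r\pmod{G_r^{*}}$, whose sign again squares away.

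I expect the real difficulty to be entirely bookkeeping rather than conceptual. In part~(2) I must track accumulated powers of $-1$ and $g(x)$ through the odd/even split and confirm they match $m(t-1)+t+r$ and $(t-1)m+r$; an off-by-one in the iteration count between the two cases is the easiest error. In part~(3) the only trap is the sign: substituting $m=n=2^{n-1}r$ into part~(1) naively gives $-(2/\alpha)(g(x))^s$, and the asserted $+(2/\alpha)$ appears only after the $\alpha(G_s^{*})^2$ term is reduced \emph{modulo $G_r^{*}$} (not modulo $G_s^{*}$), where the residue squares to a positive multiple and $(4/\alpha)-(2/\alpha)$ restores the plus sign — keeping the correct modulus is precisely what makes the stated constant come out right.
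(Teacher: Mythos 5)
Your proposal is correct and follows essentially the same route as the paper: part (1) by direct expansion of the Lucas-type Binet formula, part (2) by stepping the index down by $2m$ (i.e., inducting on $q$ in steps of two) via part (1) with a final application of part (1) at $G_{m+r}^{*}$ in the odd case, and part (3) by iterating the doubling identity $G_{2s}^{*}=\alpha(G_{s}^{*})^2+(-1)^{s+1}(g(x))^{s}G_{0}^{*}$. Your congruence-modulo-$G_{m}^{*}$ (resp.\ $G_{r}^{*}$) phrasing is just a cleaner bookkeeping of the paper's explicit quotients $T(x)$, and your sign/exponent computations, including $(m+1)(t-1)+r+1=m(t-1)+t+r$ and the sign squaring away in part (3), match the paper's.
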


\begin{proof} We prove part (1). Since $G_{m}^{*}(x)$ and $G_{n}^{*}(x)$ are of the Lucas type, they both satisfy the Binet formula
(\ref{bineformulados}).
Thus,
\[
G_{m}^{*}(x)G_{n}^{*}(x)=\left(\frac{a^m+b^m}{\alpha}\right)\left(\frac{a^n+b^n}{\alpha}\right)
=\frac{a^{m+n}+b^{m+n}}{\alpha^2}+\frac{(ab)^{m}\left(a^{n-m}+b^{n-m}\right)}{\alpha^2}.
\]
So, $G_{m}^{*}(x)G_{n}^{*}(x)=\left[G_{n+m}^{*}(x)+(ab)^mG_{n-m}^{*}(x)\right]/\alpha.$ This and $ab=-g(x)$ imply that
\[G_{m+n}^{*}(x)=\alpha G_{m}^{*}(x)G_{n}^{*}(x)-(-g(x))^{m} G_{n-m}^{*}(x).\]
This completes the proof of part (1).

We prove part 2 using cases and mathematical induction.

{\bf Case $q$ is odd}. Suppose $q=2t-1$, and  let $S(t)$ be the following statement.
For every positive integer $t$ there is a polynomial $T_t(x)$ such that
\[G_{m(2t-1)+r}^{*}(x)=G_{m}^{*}(x)T_t(x)+(-1)^{m(t-1)+t+r}(g(x))^{(t-1)m+r} G_{m-r}^{*}(x).\]
From part (1), taking $T_1(x)=\alpha G_{r}^{*}(x)$, it is easy to see that $S(t)$ is true if $t=1$.

We suppose that $S(k)$ is true. That is, suppose that there is a polynomial $T_{k}(x)$ such that
\begin{equation}\label{formula1:for:Dic2}
G_{m(2k-1)+r}^{*}(x)=G_{m}^{*}(x)T_k(x)+(-1)^{m(k-1)+t+r}(g(x))^{(k-1)m+r} G_{m-r}^{*}(x).
\end{equation}

We prove that $S(k+1)$ is true.
Notice that $G_{m(2k+1)+r}^{*}(x)=G_{(2km+r)+m}^{*}(x)$. Therefore, from part (1) we have
$$
G_{m(2k+1)+r}^{*}(x)=\alpha G_{m}^{*}(x)G_{2km+r}^{*}(x)+(-1)^{m+1}g^m(x)G_{m(2k-1)+r}^{*}(x).
$$
This and $S(k)$ (see (\ref{formula1:for:Dic2})) imply that
\[G_{m(2k+1)+r}^{*}(x)= \alpha G_{m}^{*}(x)G_{2km+r}^{*}(x)+(-1)^{m+1}(g(x))^{m}G_{m}^{*}(x)T_k (x)+M_{1}(x) \]
where $M_{1}(x)=(-1)^{km+(t+1)+r}(g(x))^{km+r} G_{m-r}^{*}(x)$.
Therefore, $G_{m(2k+1)+r}^{*}(x)$ is equal to
\[ G_{m}^{*}(x)[\alpha G_{2km+r}^{*}(x)+(-1)^{m+1}(g(x))^{m}T_k (x)]+(-1)^{km+(t+1)+r}(g(x))^{km+r} G_{m-r}^{*}(x).\]
This, with $T_{k+1}(x):=\alpha G_{2km+r}^{*}(x)+(-1)^{m+1}(g(x))^mT_k (x)$, implies  $S(k+1)$. This completes the proof when $q$
is odd.

{\bf Case $q$ is even}. This proof is similar to the case in which $q$ is odd.
Suppose $q=2t$, and  let $H(t)$ be the following statement. For every positive integer there is a polynomial $T_t(x)$
such that
\[G_{m(2t)+r}^{*}(x)=G_{m}^{*}(x)T_t(x)+(-1)^{(m+1)t}(g(x))^{mt} G_{r}^{*}(x).\]
From part (1), taking $T_1(x)=\alpha G_{r}^{*}(x)$, it is easy to see that $H(t)$ is true if $t=1$.

We suppose that $H(k)$ is true. That is, suppose that there is a polynomial $T_{k}(x)$ such that
\begin{equation}\label{formula2:for:Dic2}
G_{m(2k)+r}^{*}(x)=G_{m}^{*}(x)T_k(x)+(-1)^{(m+1)k}(g(x))^{mk} G_{r}^{*}(x).
\end{equation}

We prove that $H(k+1)$ is true.
Notice that $G_{2m(k+1)+r}^{*}(x)=G_{((2k+1)m+r)+m}^{*}(x)$. Therefore, from part (1) we have
$$
G_{2m(k+1)+r}^{*}(x)=\alpha G_{m}^{*}(x)G_{(2k+1)m+r}^{*}(x)+(-1)^{m+1}(g(x))^m G_{2mk+r}^{*}(x).
$$
This and $H(k)$ (see (\ref{formula2:for:Dic2}))  imply that
\[ G_{m(2(k+1))+r}^{*}(x)=\alpha G_{m}^{*}(x)G_{(2k+1)m+r}^{*}(x)+(-1)^{m+1}(g(x))^mG_{m}^{*}(x)T_{k} (x)+M_2(x)\]
where $M_{2}(x)=(-1)^{(k+1)(m+1)}(g(x))^{m(k+1)}G_{r}^{*}(x)$.
Therefore,
\[G_{m(2(k+1))+r}^{*}(x)=G_{m}^{*}(x) \left[\alpha G_{(2k+1)m+r}^{*}(x)+(-1)^{m+1}(g(x))^mT_{k} (x)\right]+M_{2}(x).\]
This, with $T_{k+1}(x):=\alpha G_{(2k+2)m+r}^{*}(x)+(-1)^{m}(g(x))^mT_{k} (x)$, implies  $H(k+1)$.

We finally prove part (3) by induction.  Since $G_{n}^{*}(x)$ is of the Lucas type, by the Binet formula it is easy to see that
$G_0(x)=2/\alpha$.
Let $S(n)$ be the statement: for every positive integer $n$ there is a polynomial $T_n(x)$ such that this equality holds
$G_{2^nr}^{*}(x)= G_{r}^{*}(x)T_n(x)+(2/\alpha)g^{2^{n-1}r}(x)$.

Proof of $S(2)$. From part (1) we have
$$G_{2^2r}^{*}(x)= G_{2r+2r}^{*}(x)=\alpha(G_{2r}^{*}(x))^2-(2/\alpha)(g(x))^{2r}.$$
Applying the result in part (1) for $G_{2r}^{*}(x)$ again (and simplifying) we obtain:

\begin{tabular}{rl}
	$G_{2^2r}^{*}(x)$=&$\alpha[\alpha(G_{r}^{*}(x))^2-\dfrac{2}{\alpha}(-g(x))^{r}]^2-\dfrac{2}{\alpha}(g(x))^{2r}$\\ [10pt]
	=&$G_{r}^{*}(x)[(\alpha G_{r}^{*}(x))^3-(-1)^{r}4\alpha G_r^{*}(x)(g(x))^{r}]+\frac{4(g(x))^{2r} -2(g(x))^{2r}}{\alpha}$\\ [10pt]
	=&$G_{r}^{*}(x)T_2(x)+\dfrac{2}{\alpha}(g(x))^{2r}$	
\end{tabular}

\noindent 	 where $T_2(x)=\alpha^3(G_{r}^{*}(x))^3+(-1)^{r+1}4\alpha G_{r}^{*}(x)(g(x))^{r}$. This proves $S(2)$.

We suppose that $S(k)$ is true for $k>2$, and we prove $S(k+1)$ is true. That is, we suppose that for a fixed $k$ there is a
polynomial $T_k(x)$ such that
\[G_{2^kr}^{*}(x)= G_{r}^{*}(x)T_k(x)+(2/\alpha)g^{2^{k-1}r}(x).\] From part (1) we have
$G_{2^{k+1}r}^{*}(x)= G_{2^k r+2^k r}^{*}(x)=\alpha(G_{2^k r}^{*}(x))^2-(2/\alpha)(g(x))^{2^k r}$.
Using the result from the inductive hypothesis $S(k)$ and simplifying, we obtain:

\begin{tabular}{rl}
	$G_{2^{k+1}r}^{*}(x)$=&$\alpha[ G_{r}^{*}(x)T_k(x)+\dfrac{2}{\alpha}(g(x))^{2^{k-1} r} ]^2-\dfrac{2}{\alpha}(g(x))^{2^k r}$\\ [10pt]
	=&$G_{r}^{*}(x)[\alpha G_{r}^{*}(x)T_k ^2(x)+4T_k(x)(g(x))^{2^{k-1} r} ]+\frac{4(g(x))^{2^k r} -2(g(x))^{2^kr}}{\alpha}$\\ [10pt]
	=&$G_{r}^{*}(x)T_{k+1}(x)+\dfrac{2}{\alpha}(g(x))^{2^kr}$\\
\end{tabular}

\noindent  where $T_{k+1}(x)=\alpha G_{r}^{*}(x)T_k ^2(x)+4T_k(x)(g(x))^{2^{k-1} r} $. This completes the proof of part (3).
\end{proof}

In the following part of this section, we present two divisibility properties for the GFPs. Proposition \ref{divisity:property:fibonacci} is \cite[Theorem 6]{HoggattLong}.

\begin{proposition} \label{divisity:property:fibonacci}
If $\{ G_{n}^{\prime}(x)\}$ is a GFP sequence of the Fibonacci type, then
$G_{m}^{\prime}(x)$ divides $G_{n}^{\prime}(x)$ if and only if $m$ divides $n$.
\end{proposition}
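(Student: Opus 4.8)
The plan is to prove the two implications separately, handling the easy ``if'' direction by a telescoping factorization and the harder ``only if'' direction by the division algorithm combined with the addition formula of Proposition~\ref{divisity:Hogat:property1}.

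For the forward implication, suppose $n=mk$. Using the Binet formula \eqref{bineformulauno} I would write
\[
G_{mk}'(x)=\frac{a^{mk}-b^{mk}}{a-b}=\frac{a^{m}-b^{m}}{a-b}\sum_{i=0}^{k-1}a^{mi}b^{m(k-1-i)}=G_m'(x)\sum_{i=0}^{k-1}a^{mi}b^{m(k-1-i)}.
\]
The cofactor $\sum_{i=0}^{k-1}a^{mi}b^{m(k-1-i)}$ is symmetric in $a$ and $b$, hence a polynomial in $a+b=d(x)$ and $ab=-g(x)$, and therefore lies in $\mathbb{Z}[x]$. Thus $G_m'(x)\mid G_n'(x)$. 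Alternatively one can run a two-step induction on $k$ using the identity $G_{mk+m}'(x)=\alpha G_m'(x)G_{mk}^{*}(x)+(-g(x))^{m}G_{m(k-1)}'(x)$ from Proposition~\ref{divisity:Hogat:property1}(3), with base cases $G_0'(x)=0$ and $G_m'(x)\mid G_m'(x)$.

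For the reverse implication I would use a Euclidean reduction. Write $n=qm+r$ with $0\le r<m$; the goal is to force $r=0$. Applying Proposition~\ref{divisity:Hogat:property1}(1) in the shifted form $G_{s+t}'(x)=G_{t+1}'(x)G_s'(x)+g(x)G_t'(x)G_{s-1}'(x)$ with $s=qm$ and $t=r$ gives
\[
G_n'(x)=G_{r+1}'(x)G_{qm}'(x)+g(x)G_r'(x)G_{qm-1}'(x).
\]
By the forward direction $G_m'(x)\mid G_{qm}'(x)$, so the hypothesis $G_m'(x)\mid G_n'(x)$ yields $G_m'(x)\mid g(x)G_r'(x)G_{qm-1}'(x)$. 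Now $\gcd(G_m'(x),g(x))=1$ by Lemma~\ref{gcdlemmas}(3), and $\gcd(G_m'(x),G_{qm-1}'(x))=1$: any common divisor would divide both $G_{qm}'(x)$ (since $G_m'(x)\mid G_{qm}'(x)$) and $G_{qm-1}'(x)$, hence divide $\gcd(G_{qm}'(x),G_{qm-1}'(x))=1$, the latter being the consecutive case of Proposition~\ref{gcddistance1;2}(2). Cancelling these two coprime factors leaves $G_m'(x)\mid G_r'(x)$.

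It remains to deduce $r=0$, and this is the step I expect to be the main obstacle, since it is where the actual content lies rather than in any algebraic identity. If $r\ge 1$ then $G_r'(x)$ is a nonzero polynomial, so $G_m'(x)\mid G_r'(x)$ would require $\deg G_m'(x)\le\deg G_r'(x)$; the plan is to contradict this by showing $\deg G_r'(x)<\deg G_m'(x)$ whenever $0<r<m$. From \eqref{bineformulauno} one checks that $\deg G_n'(x)$ is strictly increasing in $n$ when $\deg d(x)\ge 1$, which is the generic situation, and the delicate point is precisely to verify this monotonicity (equivalently, that the highest-degree term of $G_n'(x)$ does not cancel). Granting it, $G_r'(x)=0$, forcing $r=0$ and hence $m\mid n$, which completes the proof.
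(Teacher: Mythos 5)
Your proposal is, in structure, the paper's own proof. The forward direction is the same Binet-formula factorization (the paper phrases it as ``$(a^m-b^m)$ divides $(a^{mq}-b^{mq})$, by induction on $q$''; your symmetric-function justification that the cofactor $\sum_{i}a^{mi}b^{m(k-1-i)}$ lies in $\mathbb{Z}[x]$ is a welcome extra detail the paper omits). The reverse direction is also the same: Euclidean division $n=qm+r$, the identity $G_{qm+r}'(x)=G_{r+1}'(x)G_{qm}'(x)+g(x)G_{r}'(x)G_{qm-1}'(x)$ obtained from Proposition \ref{divisity:Hogat:property1}(1) (the paper reaches the same display after an extra regrouping step), cancellation of $g(x)$ and $G_{qm-1}'(x)$ against $G_{m}'(x)$, and finally a degree comparison to force $r=0$. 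Your justification of $\gcd(G_{m}'(x),G_{qm-1}'(x))=1$ --- via $G_{m}'(x)\mid G_{qm}'(x)$ together with the consecutive-index case of Proposition \ref{gcddistance1;2}(2) --- is in fact tighter than the paper's, which cites $\gcd(G_{mq-1}'(x),G_{mq}'(x))=1$ and $\gcd(G_{mq}'(x),g(x))=1$ and leaves that same inference implicit.

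The one step you decline to prove --- that $\deg G_{r}'(x)<\deg G_{m}'(x)$ whenever $0<r<m$ --- is exactly the step the paper also does not prove: its argument ends with the bare assertion ``that is a contradiction since degree $(G_{r-1}'(x))<$ degree $(G_{m-1}'(x))$.'' So, measured against the paper, you have reproduced the proof, gap included; but your instinct that this is the delicate point is correct, and the gap is genuine. Strict degree growth fails when $d(x)$ is a constant, which the paper's definition permits and which occurs for one of its headline examples, the Jacobsthal polynomials ($d(x)=1$, $g(x)=2x$), where $\deg J_3(x)=\deg J_4(x)=1$. Worse, for Jacobsthal the proposition itself is false as stated: $J_2(x)=1$ divides $J_3(x)=1+2x$ although $2\nmid 3$. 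So the result tacitly needs the hypothesis $\deg d(x)\ge 1$; granting that, the monotonicity you ask for can indeed be verified (compare the degrees of the terms $a^{i}b^{n-1-i}$, splitting into the cases $\deg g<2\deg d$, $\deg g=2\deg d$, $\deg g>2\deg d$, and using the standing assumption $d^2(x)+4g(x)>0$ to rule out cancellation of leading coefficients), which would complete your proof --- and repair the paper's.
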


\begin{proof}
We first prove the sufficiency. Based on the hypothesis that $m$ divides $n$,
there is an integer $q\ge1$ such that $n=mq$. Then, using the Binet formula (\ref{bineformulauno}), we have
\[G_{m}^{\prime}(x)=(a^m-b^m)/(a-b)\,\, \text{  and  }\,\, G_{mq}^{\prime}(x)=(a^{mq}-b^{mq})/(a-b). \]

It is easy to see --using induction on $q$-- that $(a^m-b^m)$ divides $(a^{mq}-b^{mq})$ which implies that $G_{m}^{\prime}(x)$ divides
$G_{mq}^{\prime}(x)$. This proves the sufficiency.

We now prove the necessity. Suppose that $m$ does not divide $n$ and that $G_{m}^{\prime}(x)$ divides $G_{n}^{\prime}(x)$ for $m$ and $n$ greater than $1$.
Therefore, there are integers $q$ and $r$ with $0<r<n$ such that $n=mq+r$. Then by Proposition \ref{divisity:Hogat:property1} part (1)
\begin{eqnarray*}
  G_{n}^{\prime}(x) &=& G_{mq+r}^{\prime}(x) \\
   &=& G_{mq+1}^{\prime}(x)G_{r}^{\prime}(x)+g(x)G_{mq}^{\prime}(x)G_{r-1}^{\prime}(x)\\
   &=& \left(d(x)G_{mq}^{\prime}(x)+g(x)G_{mq-1}^{\prime}(x)\right)G_{r}^{\prime}(x)+g(x)G_{mq}^{\prime}(x)G_{r-1}^{\prime}(x)\\
   &=& d(x) G_{mq}^{\prime}(x) G_{r}^{\prime}(x)+g(x)G_{mq-1}^{\prime}(x)G_{r}^{\prime}(x)+g(x)G_{mq-1}^{\prime}(x)G_{r}^{\prime}(x).
\end{eqnarray*}
Grouping terms and simplifying we obtain,
\[G_{n}^{\prime}(x)=G_{mq}^{\prime}(x)G_{r+1}^{\prime}(x)+g(x)G_{mq-1}^{\prime}(x)G_{r}^{\prime}(x).\]
This and the fact that $G_{m}^{\prime}(x)\mid G_{n}^{\prime}(x)$ and $G_{m}^{\prime}(x) \mid G_{mq}^{\prime}(x)$ imply that
$$G_{m}^{\prime}(x)\mid g(x)G_{mq-1}^{\prime}(x)G_{r}^{\prime}(x).$$
From Lemma \ref{gcdlemmas} part (3) and Proposition \ref{gcddistance1;2} we know that $\gcd(G_{mq}^{\prime}(x),g(x))=1$
and that $\gcd(G_{mq-1}^{\prime}(x), G_{mq}^{\prime}(x))=1$, respectively.
These two facts imply that $G_{m}^{\prime}(x)\mid G_{r}^{\prime}(x).$
 That is a contradiction since degree ($G_{r-1}^{\prime}(x))<$ degree $(G_{m-1}^{\prime}(x))$. This completes the proof.
\end{proof}

The following corollary gives a factorization of a GPF of Fibonacci type $G_{n}^{\prime}(x)$. It is a direct application of
Theorem \ref{divisity:property:fibonacci}  and some results given in articles by Bliss, et al. and Nowicki \cite{bliss, nowicki}.
In fact, the proof of Corollary \ref{corollary:nowicki} follows from Theorem \ref{divisity:property:fibonacci}
and \cite[Theorem 2]{nowicki}, so we omit the details. We start by giving a short background of the lcm sequences for
polynomials that fits the context of this paper, a more general result can be found in  \cite{nowicki}.
If $\lcm$ denotes the least common multiple, then we define $c_{n}(x)$
recursively as follows:  Let $c_{1}=1$ and
$$c_{n}(x)=\frac{\lcm(G_{1}^{\prime}(x), G_{2}^{\prime}(x), \dots, G_{n}^{\prime}(x))}{\lcm(G_{1}^{\prime}(x), G_{2}^{\prime}(x), \dots, G_{n-1}^{\prime}(x))} \quad \text{ for } n\ge 2.$$

\begin{corollary} \label{corollary:nowicki} If $G_{n}^{\prime}(x)$ is a GFP of Fibonacci type, then $G_{n}^{\prime}(x)=\prod_{d|n}c_d(x)$.
\end{corollary}

The factors $c_{i}$ are not always irreducible polynomials. For instance, if $G_{10}^{\prime}(x)$ is a
Chebyshev polynomial of the first kind, then its irreducible factoring is
 $$G_{10}^{\prime}(x) = 2x(-1-2x+4x^2) (-1+2x+4x^2) (5-20x^2 +16x^4).$$
 Using Corollary \ref{corollary:nowicki} we have
 \begin{eqnarray*}
G_{10}^{\prime}(x) &=& c_1(x)c_{2}(x)c_{5}(x)c_{10}(x)\\
&=&(1)(2x)(1-12x^2+16x^4) (5-20x^2 +16x^4).
\end{eqnarray*}

\begin{proposition} \label{divisibity:property:first:type} Let $m$ be a positive integer that is not a power of two.
If $G_{m}^{*}(x)$ is a GFP of Lucas type, then for all  odd divisors $q$ of $m$, it holds that
$G_{m/q}^{*}(x)$ divides $G_{m}^{*}(x)$. Moreover $G_{m/q}^{*}(x)$ is of the Lucas type.
\end{proposition}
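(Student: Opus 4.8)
The plan is to reduce the assertion to a divisibility between two members of the same Lucas-type sequence and then run an induction that removes two copies of the index at a time, using crucially that $q$ is odd. Write $k = m/q$, so that $G_{m/q}^{*}(x) \mid G_{m}^{*}(x)$ becomes $G_{k}^{*}(x) \mid G_{kq}^{*}(x)$ for every odd $q$. The hypothesis that $m$ is not a power of two only guarantees that some odd divisor $q>1$ exists, making the statement non-vacuous; the argument itself will work for every odd divisor, including the trivial $q=1$. Moreover, since $G_{k}^{*}(x)$ is by construction a term of the given Lucas-type sequence (it satisfies the Binet formula \eqref{bineformulados}), the closing clause ``$G_{m/q}^{*}(x)$ is of the Lucas type'' is immediate once divisibility is shown; the real content is the divisibility.

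First I would fix $k$ and induct on the odd integer $q$. The base case $q=1$ is trivial since $G_{k}^{*}(x)\mid G_{k}^{*}(x)$. For the inductive step, assume $G_{k}^{*}(x)\mid G_{k(q-2)}^{*}(x)$ for some odd $q\ge 3$ and apply Proposition \ref{Dic2} part (1) to the two indices $k$ and $k(q-1)$, whose ordering hypothesis $k\le k(q-1)$ holds because $q\ge 2$. This gives
\[
G_{kq}^{*}(x) = \alpha\, G_{k}^{*}(x)\, G_{k(q-1)}^{*}(x) + (-1)^{k+1}(g(x))^{k}\, G_{k(q-2)}^{*}(x).
\]
The first summand is visibly a multiple of $G_{k}^{*}(x)$, and the second is a multiple of $G_{k}^{*}(x)$ by the inductive hypothesis, so $G_{k}^{*}(x)\mid G_{kq}^{*}(x)$, completing the induction. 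Because $q$ is odd, the descent $q\mapsto q-2$ runs through odd values and terminates exactly at $q=1$, so every odd $q$ is reached.

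The step most in need of care is the index bookkeeping: I must check that the ordering condition $k\le k(q-1)$ of Proposition \ref{Dic2} part (1) holds (it fails only at $q=1$, which is precisely the base case) and that the shifted index $k(q-2)$ stays nonnegative, which is what forces the induction to step by two through the odd numbers rather than by one. A clean alternative, paralleling the proof of Proposition \ref{divisity:property:fibonacci}, is to argue straight from the Binet formula \eqref{bineformulados}: setting $u=a^{k}$ and $v=b^{k}$ one has $G_{kq}^{*}(x)/G_{k}^{*}(x) = (u^{q}+v^{q})/(u+v)$, and for odd $q$ the factor $u+v$ divides $u^{q}+v^{q}$ with quotient $\sum_{i=0}^{q-1}(-1)^{i}u^{q-1-i}v^{i}$, which is symmetric in $u,v$. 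Since $u+v = a^{k}+b^{k}$ and $uv=(-g(x))^{k}$ lie in $\mathbb{Z}[x]$, this quotient lies in $\mathbb{Z}[x]$, yielding $G_{k}^{*}(x)\mid G_{kq}^{*}(x)$ over $\mathbb{Z}[x]$. The only subtlety in this second route is confirming that the symmetric quotient has coefficients in $\mathbb{Z}[x]$, i.e. invoking the fundamental theorem of symmetric polynomials with elementary symmetric functions $a^{k}+b^{k}$ and $(ab)^{k}$.
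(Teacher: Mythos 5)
Your proof is correct, and your primary argument takes a genuinely different route from the paper's. The paper proves this proposition directly from the Binet formula (\ref{bineformulados}): writing $m=dq$ with $d=m/q$, it sets $X=a^{d}$, $Y=b^{d}$, uses the fact that $X+Y$ divides $X^{q}+Y^{q}$ for odd $q$, and concludes $G_{m}^{*}(x)=Q(x)\,G_{d}^{*}(x)$ --- in other words, the paper's proof is essentially your \emph{alternative} route. Notably, the paper simply asserts the existence of the polynomial $Q(x)$, whereas you correctly flag the one subtle point there: the quotient $\sum_{i=0}^{q-1}(-1)^{i}X^{q-1-i}Y^{i}$ lives a priori in the ring generated by $a$ and $b$ (which involve $\sqrt{d^{2}(x)+4g(x)}$), and one needs its symmetry in $a,b$, together with the fact that $a^{k}+b^{k}$ and $(ab)^{k}=(-g(x))^{k}$ lie in $\mathbb{Z}[x]$, to conclude that the quotient is in $\mathbb{Z}[x]$. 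Your main argument --- induction on odd $q$ stepping by two, applying Proposition \ref{Dic2} part (1) with indices $k$ and $k(q-1)$ to get $G_{kq}^{*}(x)=\alpha G_{k}^{*}(x)G_{k(q-1)}^{*}(x)+(-1)^{k+1}(g(x))^{k}G_{k(q-2)}^{*}(x)$ --- avoids this issue entirely, since every term of that identity already lies in $\mathbb{Z}[x]$; your index bookkeeping is sound ($k\le k(q-1)$ holds for all $q\ge 3$, and the descent through odd values terminates at the trivial base $q=1$), and there is no circularity because Proposition \ref{Dic2} precedes this proposition and is proved independently from the Binet formulas. Incidentally, your induction is close in spirit to the paper's own second derivation of the same fact (Corollary \ref{divisity:property:firstype} via Proposition \ref{propiedadDivision}), which also rests on Proposition \ref{Dic2}, though through part (2) and Lemma \ref{Dic1} rather than part (1). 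In short: the paper's Binet computation is shorter but glosses over where the quotient lives; your induction is slightly longer but fully rigorous over $\mathbb{Z}[x]$ with no appeal to symmetric functions.
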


\begin{proof} Let $q$ be an odd divisor of $m$. If $q=1$ the result is obvious. Let us suppose that $q\not = 1$. Therefore, there
is an integer $d>1$ such that $m=dq$. Using the Binet formula (\ref{bineformulados}), where $a:=a(x)$ and $b:=b(x)$, we have
$G_{m}^{*}(x)=G_{dq}^{*}(x)=(a^{dq}+b^{dq})/\alpha$. Let $X=a^{d}$ and $Y=b^{d}$. Using induction it is possible to prove that $X+Y$
divides $X^q+Y^q$. This implies that there is a
polynomial $Q(x)$ such that $(X^q+Y^q)/\alpha=Q(x)(X+Y)/\alpha$. Therefore,
\[G_{m}^{*}(x)=G_{dq}^{*}(x)=(a^{dq}+b^{dq})/\alpha=Q(x)(a^{d}+b^{d})/\alpha.\]
This and the Binet formula (\ref{bineformulados}) imply that $G_{m}^{*}(x)=G_{d}^{*}(x) Q(x)$.
\end{proof}

\section{Characterization of the strong divisibility property}\label{gcd:characterization}

In this section we prove the main results of this paper. Thus, we prove a necessary and sufficient condition for the
strong divisibility property for GFPs of Fibonacci type. We also prove that the
strong divisibility property holds partially for GFPs of Lucas type. The other important result in this
section is that the strong divisibility property holds partially for a GFP and its equivalent.
The results here therefore provide a complete characterization of the strong divisibility property satisfied by the GFPs
of Fibonacci type.

We note that if $G_{m}^{*}(x)$ and  $G_{n}^{\prime}(x)$ are two equivalent polynomials from Table \ref{equivalent}, then $$\gcd(G_{m}^{*}(x), G_{n}^{\prime}(x)) \text{ is either } G_{\gcd(m,n)}^{*}(x) \text{ or one.}$$
However, it is not true in general. Here we give an example of a pair of GFPs that do not satisfy this property.
First we define a Fibonacci type polynomial
\[G_0^{\prime}(x)=0, \; G_1^{\prime}(x)= 1,\;  \text{and} \;  G_{n}^{\prime}(x)= (2x+1) G_{n - 1}^{\prime}(x) + G_{n - 2}^{\prime}(x) \text{ for } n\ge 2.\]
We now define its equivalent polynomial of the Lucas type
\[G_0^{*}(x)=2, \; G_1^{*}(x)= 2x+1,\;  \text{and} \;  G_{n}^{*}(x)= (2x+1) G_{n - 1}^{*}(x) + G_{n - 2}^{*}(x) \text{ for } n\ge 2.\]
After some calculations we see that $\gcd(G_{m}^{*}(x), G_{n}^{\prime}(x))$ is one, two, or $G_{\gcd(m,n)}^{*}(x)$.
Using the same polynomials we can also see that $\gcd(G_{m}^{*}(x), G_{n}^{*}(x))$ is one, two, or $G_{\gcd(m,n)}^{*}(x)$.
If we do the same calculations for numerical sequences (Fibonacci and Lucas numbers), we can see that they have the same behavior.

In this section we use the notation $E_{2}(n)$  to represent the \emph{integer exponent base two} of a
positive integer $n$  which is defined to be the largest integer $k$ such that $2^{k}\mid n$.

\begin{lemma}\label{Dic1}
If $R(x)$, $S(x)$, and $T(x)$ are polynomial in $\mathbb{Z}[x]$, then
\[\gcd(R(x),T(x))=\gcd(R(x),R(x)S(x)-T(x)).\]
\end{lemma}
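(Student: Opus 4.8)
The statement to prove is the elementary gcd identity
\[
\gcd(R(x),T(x))=\gcd\bigl(R(x),\,R(x)S(x)-T(x)\bigr),
\]
for polynomials $R(x),S(x),T(x)\in\mathbb{Z}[x]$. The plan is to prove this by a standard common-divisor argument, showing that the two gcds divide each other and hence are equal (up to a unit, which in $\mathbb{Z}[x]$ we absorb into the usual convention for gcd).

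First I would set $D(x)=\gcd(R(x),T(x))$ and $E(x)=\gcd\bigl(R(x),R(x)S(x)-T(x)\bigr)$, and argue $D(x)\mid E(x)$. Since $D(x)$ divides both $R(x)$ and $T(x)$, it divides any $\mathbb{Z}[x]$-linear combination of them; in particular it divides $R(x)S(x)-T(x)$. As $D(x)$ also divides $R(x)$, it is a common divisor of $R(x)$ and $R(x)S(x)-T(x)$, hence $D(x)\mid E(x)$ by definition of the gcd.

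Next I would argue the reverse divisibility $E(x)\mid D(x)$ by a symmetric observation. Since $E(x)$ divides both $R(x)$ and $R(x)S(x)-T(x)$, it divides the combination $R(x)S(x)-\bigl(R(x)S(x)-T(x)\bigr)=T(x)$. Thus $E(x)$ is a common divisor of $R(x)$ and $T(x)$, so $E(x)\mid D(x)$. Since $D(x)\mid E(x)$ and $E(x)\mid D(x)$, they are associates, and therefore equal as gcds.

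There is essentially no obstacle here: the only mild point to keep in mind is that one is working in $\mathbb{Z}[x]$, where gcd is defined only up to multiplication by a unit $\pm1$, so the mutual divisibility yields equality under the standard normalization of the gcd. The identity is exactly the one step of the Euclidean-type reduction used implicitly elsewhere in the paper (for instance in Lemma~\ref{gcdlemmas} and Lemma~\ref{gcdforproposition4}), and it is stated separately here so that the forthcoming main theorems can invoke it cleanly.
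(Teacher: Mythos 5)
Your proof is correct: the mutual-divisibility argument (each gcd divides the other, since $R(x)S(x)-T(x)$ is a $\mathbb{Z}[x]$-combination of $R(x)$ and $T(x)$, while $T(x)=R(x)S(x)-\bigl(R(x)S(x)-T(x)\bigr)$ is a combination of $R(x)$ and $R(x)S(x)-T(x)$), together with the caveat that gcds in $\mathbb{Z}[x]$ are determined only up to the unit $\pm 1$, is exactly the standard argument. Note that the paper states Lemma~\ref{Dic1} with no proof at all, treating it as elementary, so your write-up simply supplies the omitted argument and there is nothing to diverge from.
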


\begin{proposition}\label{propiedadDivision}
Let $\{ G_{n}^{*}(x)\}$ be a GFP sequence of the Lucas type.
If $m \mid n$ and $E_2(n)=E_{2}(m)$, then
$
\gcd(G_{n}^{*}(x),G_{m}^{*}(x))=G_{m}^{*}(x).
$
\end{proposition}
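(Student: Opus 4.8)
The plan is to reduce the claim to a single divisibility statement. Since $G_m^{*}(x)$ always divides itself, the asserted identity $\gcd(G_n^{*}(x),G_m^{*}(x))=G_m^{*}(x)$ is equivalent to showing that $G_m^{*}(x)$ divides $G_n^{*}(x)$. So the whole proof amounts to extracting a divisibility from the two hypotheses $m\mid n$ and $E_2(n)=E_2(m)$, and for this I would invoke Proposition \ref{divisibity:property:first:type}.

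First I would use $m\mid n$ to write $n=mq$ for some positive integer $q$. The decisive observation is that the exponent hypothesis forces $q$ to be \emph{odd}. Indeed, since $2$ is prime the integer exponent base two is additive on products, so $E_2(n)=E_2(mq)=E_2(m)+E_2(q)$; combining this with $E_2(n)=E_2(m)$ gives $E_2(q)=0$, that is, $q$ is odd. If $q=1$, then $n=m$ and the conclusion is immediate. Otherwise $q>1$ is odd, so $n=mq$ possesses an odd factor strictly greater than $1$, and hence $n$ is not a power of two.

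This places us exactly in the hypothesis of Proposition \ref{divisibity:property:first:type}: applying it with the (not-a-power-of-two) index $n$ and the odd divisor $q$ of $n$ yields that $G_{n/q}^{*}(x)=G_m^{*}(x)$ divides $G_n^{*}(x)$, and moreover $G_m^{*}(x)$ is again of Lucas type. Therefore $\gcd(G_n^{*}(x),G_m^{*}(x))=G_m^{*}(x)$. The main obstacle, such as it is, lies in the bookkeeping of the two reductions rather than in any computation: one must translate the equality $E_2(n)=E_2(m)$ into the oddness of $q=n/m$ via additivity of $E_2$, and one must separate out the trivial case $q=1$ so that the remaining case genuinely meets the ``not a power of two'' requirement of Proposition \ref{divisibity:property:first:type}. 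Once these two points are dispatched, the statement follows directly from the cited proposition.
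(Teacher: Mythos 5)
Your proof is correct, and it is not circular: Proposition \ref{divisibity:property:first:type} is proved in Section \ref{mainresults:section} directly from the Binet formula (via the fact that $X+Y$ divides $X^q+Y^q$ for odd $q$), independently of Proposition \ref{propiedadDivision}, so you may legitimately invoke it here. Your route, however, differs from the paper's. The paper also begins by writing $n=mq$ and deducing that $q$ is odd from $E_2(m)=E_2(n)$, but it then obtains the divisibility from Lemma \ref{Dic1} together with the decomposition of Proposition \ref{Dic2} part (2): taking $r=0$ there gives $G_{n}^{*}(x)=G_{m}^{*}(x)T(x)\pm (g(x))^{(t-1)m}G_{m}^{*}(x)$, so both terms are multiples of $G_{m}^{*}(x)$ and the gcd is $G_{m}^{*}(x)$. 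Your argument replaces this with the ready-made divisibility statement of Proposition \ref{divisibity:property:first:type}, which buys two things: it avoids the paper's slight abuse of Proposition \ref{Dic2} part (2) (whose hypotheses require $r$ to be a \emph{positive} integer, whereas the paper applies it with $r=0$), and it makes a logical symmetry explicit --- the paper later records Corollary \ref{divisity:property:firstype}, which restates Proposition \ref{divisibity:property:first:type} verbatim, as a consequence of Proposition \ref{propiedadDivision}, and your proof shows the derivation also runs in the opposite direction. The two bookkeeping points you flag (additivity of $E_2$ forcing $q$ to be odd, and splitting off $q=1$ so that in the remaining case $n$ genuinely fails to be a power of two, as the hypothesis of Proposition \ref{divisibity:property:first:type} demands) are exactly the right ones, and both are handled correctly.
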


\begin{proof} First we recall that $E_{2}(n)$ is the largest integer
$k$ such that $2^{k}\mid n$.  We suppose that $n=mq$ with $q\in\mathbb{N}$.
Since $E_{2}(m)=E_{2}(n)=E_{2}(mq)$,
we conclude that $q$ is odd. This, Lemma \ref{Dic1}, and
Proposition \ref{Dic2} part (2) imply that
\begin{eqnarray*}
\gcd(G_{n}^{*}(x),G_{m}^{*}(x))&=&\gcd(G_{qm}^{*}(x),G_{m}^{*}(x))\\
&=&\gcd (G_{m}^{*}(x)T(x)+(-1)^{n}(-g(x))^{(n-1)m} G_{m}^{*}(x),G_{m}^{*}(x))\\
&=&G_{m}^{*}(x).
\end{eqnarray*}
This proves the proposition.
\end{proof}

\begin{corollary} \label{divisity:property:firstype} Let $G_{m}^{*}(x)$ be a GFP of Lucas type.
If $m>0$ is not a power of two, then for all odd divisors $q$ of $m$, it  follows that
$G_{m/q}^{*}(x)$ divides $G_{m}^{*}(x)$. More over $G_{m/q}^{*}(x)$ is of the Lucas type.
\end{corollary}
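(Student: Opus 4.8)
The plan is to deduce this directly from Proposition \ref{propiedadDivision}, so that essentially all of the work reduces to a bookkeeping fact about the two-adic valuation $E_2$. Fix an odd divisor $q$ of $m$ and set $d=m/q$, so that $m=dq$ with $d\mid m$. The one substantive observation is that since $q$ is odd we have $E_2(q)=0$, and therefore $E_2(m)=E_2(dq)=E_2(d)+E_2(q)=E_2(d)$. This is exactly the hypothesis that Proposition \ref{propiedadDivision} requires, and the oddness of $q$ is precisely what supplies it.

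With this in hand I would apply Proposition \ref{propiedadDivision} with its index $n$ taken to be our $m$ and its index $m$ taken to be our $d=m/q$. Both hypotheses $d\mid m$ and $E_2(m)=E_2(d)$ hold by the previous step, so the proposition yields $\gcd(G_m^{*}(x),G_{m/q}^{*}(x))=G_{m/q}^{*}(x)$. By the definition of the greatest common divisor this says precisely that $G_{m/q}^{*}(x)$ divides $G_m^{*}(x)$, which is the divisibility assertion of the corollary.

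For the final clause I would simply note that $G_{m/q}^{*}(x)$ is the $(m/q)$-th term of the same Lucas-type sequence $\{G_n^{*}(x)\}$, hence it satisfies the Binet formula (\ref{bineformulados}) and is of the Lucas type. I do not anticipate any real obstacle: the entire content of the corollary is the valuation identity $E_2(m)=E_2(m/q)$ for odd $q$, after which the conclusion is immediate from Proposition \ref{propiedadDivision}. Note that this is the same statement as Proposition \ref{divisibity:property:first:type}; the purpose of recording it here as a corollary is to recover that divisibility from the clean gcd computation just established rather than from the Binet-formula argument used earlier.
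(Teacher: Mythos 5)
Your proposal is correct and follows exactly the paper's own proof: the paper likewise observes that $E_2(m/q)=E_2(m)$ for odd $q$ and then invokes Proposition \ref{propiedadDivision} to get $\gcd(G_{m}^{*}(x),G_{m/q}^{*}(x))=G_{m/q}^{*}(x)$. Your write-up merely makes the valuation identity and the index assignment explicit, which is a faithful expansion of the same argument.
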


\begin{proof}
It is easy to see that $E_2(m/q)=E_2(m)$. Therefore, the conclusion follows by Proposition \ref{propiedadDivision}.
\end{proof}

\begin{proposition}\label{propiedadDivisioncase2}
Let $d_{k}=\gcd(G_0 ^*(x),G_{k} ^{*}(x))$ where  $G_{k}^{*}(x)$ is a GFP of the Lucas type. Suppose that there is an integer  $k'>0$  such that $d_{k'}=2$.
If  $m$ is the minimum positive integer such that $d_{m}=2$, then $m|n$ if and only if $d_{n}=2$.
\end{proposition}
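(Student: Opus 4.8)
The plan is to reduce the entire statement to the single divisibility assertion ``$2 \mid G_n^*(x)$ in $\mathbb{Z}[x]$'' and then to run an index-divisibility argument of the classical rank-of-apparition flavor, powered by the addition formulas of Proposition \ref{Dic2}. First I would pin down the constants. Since $G_0^*(x) = p_0(x)$ is a constant with $|p_0(x)| \in \{1,2\}$, the gcd $d_n = \gcd(p_0(x), G_n^*(x))$ is a positive constant dividing $2$, so $d_n \in \{1,2\}$ and $d_n = 2$ exactly when $2 \mid G_n^*(x)$. The hypothesis that some $d_{k'} = 2$ forces $|p_0(x)| = 2$; hence $\alpha = 2/p_0(x) = \pm 1$ is odd, $2 \mid G_0^*(x)$, and, crucially, the standing condition $\gcd(p_0(x), g(x)) = 1$ becomes $\gcd(2, g(x)) = 1$, i.e. $2 \nmid g(x)$. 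Because $2$ is prime in $\mathbb{Z}[x]$ and $2 \nmid g(x)$, we have $2 \mid g(x)^k H(x) \Rightarrow 2 \mid H(x)$ for every $H(x)$; this cancellation is the engine of the argument.

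For sufficiency ($m \mid n \Rightarrow d_n = 2$) I would prove $2 \mid G_{mk}^*(x)$ for all $k \ge 0$ by strong induction on $k$, with base cases $2 \mid G_0^*(x)$ and $2 \mid G_m^*(x)$ (the latter by the definition of $m$). In the inductive step, Proposition \ref{Dic2} part (1) with indices $m$ and $mk$ gives
\[ G_{m(k+1)}^*(x) = \alpha\, G_m^*(x) G_{mk}^*(x) + (-1)^{m+1} g(x)^m G_{m(k-1)}^*(x), \]
and both summands are divisible by $2$ (by $2 \mid G_m^*(x)$ in the first, and by the inductive hypothesis $2 \mid G_{m(k-1)}^*(x)$ in the second). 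I would emphasize that one cannot simply invoke $G_m^*(x) \mid G_{mk}^*(x)$: for even $k$ this already fails for Lucas numbers, so what propagates is divisibility by the constant $2$, not by $G_m^*(x)$.

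For necessity ($d_n = 2 \Rightarrow m \mid n$) I would write $n = mq + r$ with $0 \le r < m$ via the division algorithm. If $q = 0$ then $2 \mid G_r^*(x)$ with $0 \le r < m$, and minimality of $m$ (together with $2 \mid G_0^*(x)$) forces $r = 0$. Otherwise, assuming for contradiction that $1 \le r < m$, Proposition \ref{Dic2} part (2) applies with our $m$ as its modulus: since $2 \mid G_m^*(x)$, the term $G_m^*(x) T(x)$ vanishes modulo $2$, leaving $2 \mid g(x)^{e} G_{m-r}^*(x)$ when $q$ is odd and $2 \mid g(x)^{e} G_r^*(x)$ when $q$ is even, for the relevant exponents $e$ (the signs $(-1)^{\cdots}$ being irrelevant modulo $2$). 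Cancelling the factor $g(x)^e$ through $2 \nmid g(x)$ yields $2 \mid G_{m-r}^*(x)$ or $2 \mid G_r^*(x)$; since $0 < r < m$ and hence $0 < m - r < m$ as well, either conclusion contradicts the minimality of $m$. Thus $r = 0$ and $m \mid n$.

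The main obstacle I anticipate is bookkeeping rather than conceptual: ensuring the parity split of Proposition \ref{Dic2} part (2) is invoked under its correct hypotheses ($q \ge 1$ and $1 \le r < m$), and checking that after reduction modulo $2$ the surviving term is exactly a unit-times-power-of-$g$ multiple of $G_{m-r}^*(x)$ or $G_r^*(x)$, so that the $2 \nmid g(x)$ cancellation delivers a clean contradiction with the minimality of $m$.
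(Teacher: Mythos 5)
Your proof is correct. Its necessity half is essentially the paper's own argument: write $n=mq+r$, apply Proposition \ref{Dic2} part (2), strip the power of $g(x)$, and invoke minimality of $m$; the only cosmetic difference is that you cancel $g(x)^{e}$ via primality of $2$ in $\mathbb{Z}[x]$ together with $\gcd(p_0(x),g(x))=1$, where the paper invokes Lemma \ref{gcdlemmas} part (3) in gcd form. The sufficiency half, however, is genuinely different, and it is the better of the two. The paper deduces $d_n=2$ from $m\mid n$ by citing Proposition \ref{propiedadDivision} to conclude $\gcd(G_m^{*}(x),G_n^{*}(x))=G_m^{*}(x)$; but that proposition carries the hypothesis $E_2(m)=E_2(n)$, which is not verified there and which genuinely fails when $q=n/m$ is even (for Lucas numbers, $\gcd(L_3,L_6)=\gcd(4,18)=2\neq 4=L_3$). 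Your strong induction on $k$ via Proposition \ref{Dic2} part (1),
\[
G_{m(k+1)}^{*}(x)=\alpha\, G_m^{*}(x)G_{mk}^{*}(x)+(-1)^{m+1}g(x)^{m}G_{m(k-1)}^{*}(x),
\]
propagates only divisibility by the constant $2$ rather than divisibility by $G_m^{*}(x)$, which is exactly the weakening needed for the induction to survive even multiples; your explicit remark that $G_m^{*}(x)\mid G_{mk}^{*}(x)$ fails for even $k$ identifies precisely why the shortcut is illegitimate. So your route buys a sufficiency direction that is valid in all cases --- it quietly repairs a gap in the paper's own proof --- at the cost of a slightly longer argument; the paper's citation is shorter but, as written, is only justified when $n/m$ is odd.
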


\begin{proof} We suppose that $m$ is the minimum positive integer such that $d_{m}=2$.
Let $m|n$, by  Proposition \ref{propiedadDivision} we know that
$\gcd(G^* _{m}(x),G_{n}^{*}(x))=G_{m}^{*}(x)$ (we recall that $G^*_{0}(x)=p_0(x)$ and $|p_0(x)|=1$ or $2$).
This and the fact that $2|G_{m} ^*(x)$ imply that  $\gcd(G_{0} ^*(x),G_{n} ^*(x))=2$. This proves that   $d_{n}=2$.

We note that $2|\gcd(G^{*}_{m}(x),G_{n} ^*(x))$. Suppose that there is a $n\in \mathbb{N}-\{m\}$ that satisfies the condition
$d_{n}=2$.
From the division algorithm we have that there are integers $q$ and $r$ such that $n=mq+r$ where $0\le r< m$.
This and Proposition \ref{Dic2} part (2) imply that
\[ \gcd(G^{*} _{m}(x),G_{n} ^*(x))=
    \left\{
         \begin{array}{ll}
            \gcd(G^* _{m}(x),(g(x))^{(t-1)m+r} G_{m-r}^{*}(x)), 	& \hbox{ if } $q$ \hbox{ is odd;} \\
            \gcd(G^* _{m}(x),(g(x))^{mt} G_{r}^{*}(x)),             & \hbox{ if }  $q$ \hbox{ is even.}
        \end{array}
    \right.
\]
This and Lemma \ref{gcdlemmas} part (3) imply that $\gcd(G^* _{m}(x),G_{n} ^*(x))$ is either
$$\gcd(G^* _{m}(x),G_{r}^{*}(x)) \quad \text{ or } \quad \gcd(G^* _{m}(x),G_{m-r}^{*}(x)).$$
From this and the fact that  $2|\gcd(G^* _{m}(x),G_{n} ^*(x))$,
we conclude that either
$$\gcd(G^* _{r}(x),G^* _{0}(x))=2 \quad \text{ or } \quad \gcd(G^* _{m-r}(x),G^*_{0}(x))=2.$$ This holds only if $r=0$, due to definition of $m$.
Therefore, $n=mq$.
\end{proof}

\begin{lemma}\label{fundamental}
Let $G_{k}^{*}(x)$ be a GFP of Lucas type and let $n=mq+r$ where $m, q$ and $r$ are positive integers with $r<m$.
If  $m_1= m-r$ when $q$  is odd  and  $m_1=r$ when $q$ is even, then
  $
\gcd(G_{n}^{*}(x),G_{m}^{*}(x))=\gcd (G_{m_1}^{*}(x),G_{m}^{*}(x)).
$
\end{lemma}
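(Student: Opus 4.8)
The plan is to reduce $\gcd(G_n^*(x), G_m^*(x))$ to $\gcd(G_{m_1}^*(x), G_m^*(x))$ by first replacing $G_n^*(x)$ with a much simpler residue ``modulo'' $G_m^*(x)$ via the division-type identity already established in Proposition \ref{Dic2} part (2), and then clearing the irrelevant power of $g(x)$ that this identity introduces. This is exactly the mechanism used in the proofs of Proposition \ref{propiedadDivision} and Proposition \ref{propiedadDivisioncase2}, so the work is essentially the bookkeeping of the two parities of $q$.

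First I would invoke Proposition \ref{Dic2} part (2) with the given $n = mq + r$ and $r < m$. In both parities there is a polynomial $T(x)$ and a nonnegative integer exponent $c$ (namely $c = (t-1)m + r$ when $q$ is odd and $c = mt$ when $q$ is even, with $t = \lceil q/2 \rceil$) such that
\[ G_n^*(x) = G_m^*(x)\,T(x) + (-1)^{\varepsilon}(g(x))^{c}\,G_{m_1}^*(x), \]
where $m_1 = m - r$ if $q$ is odd and $m_1 = r$ if $q$ is even, and $\varepsilon$ denotes the relevant sign exponent. I would then apply Lemma \ref{Dic1} with $R(x) = G_m^*(x)$, with the lemma's $T(x)$ taken to be $G_n^*(x)$, and with $S(x)$ taken to be the polynomial $T(x)$ above; this gives $\gcd(G_m^*(x), G_n^*(x)) = \gcd(G_m^*(x),\, G_m^*(x)T(x) - G_n^*(x))$. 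Substituting the displayed identity collapses the right-hand argument to $-(-1)^{\varepsilon}(g(x))^c G_{m_1}^*(x)$, and since the gcd is defined only up to a unit this yields $\gcd(G_m^*(x), G_n^*(x)) = \gcd(G_m^*(x),\, (g(x))^c G_{m_1}^*(x))$.

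Finally I would strip the factor $(g(x))^c$. By Lemma \ref{gcdlemmas} part (3) we have $\gcd(g(x), G_m^*(x)) = 1$, hence $\gcd((g(x))^c, G_m^*(x)) = 1$, so $(g(x))^c$ contributes no common factor and $\gcd(G_m^*(x),\, (g(x))^c G_{m_1}^*(x)) = \gcd(G_m^*(x), G_{m_1}^*(x))$, which is the asserted equality. The only point needing care is keeping the two parities of $q$ matched with the correct value of $m_1$ and the correct exponent $c$; once the identity from Proposition \ref{Dic2} part (2) is written in the unified form above, both cases are dispatched by the same two-line argument, so I expect no genuine obstacle beyond this bookkeeping.
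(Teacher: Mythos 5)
Your proposal is correct and follows essentially the same route as the paper: both apply Proposition \ref{Dic2} part (2) to write $G_n^{*}(x)=G_m^{*}(x)T(x)+f(x)G_{m_1}^{*}(x)$, reduce the gcd modulo $G_m^{*}(x)$, and then strip the signed power of $g(x)$ using Lemma \ref{gcdlemmas} part (3). Your explicit invocation of Lemma \ref{Dic1} for the reduction step (where the paper says ``it is easy to see'') and your unified sign-and-exponent bookkeeping are only cosmetic differences.
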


\begin{proof} Let
 \[
f(x)=
 \begin{cases}
         (-1)^{m(t-1)+t+r}(g(x))^{(t-1)m+r} , & \mbox{ if  $q$ is odd;}\\
        (-1)^{(m+1)t}(g(x))^{mt},  & \mbox{ if  $q$   is even.}
\end{cases}
 \]
This and Lemma \ref{gcdlemmas} part (3) imply that  $\gcd(G_{m}^{*}(x),f(x))=1$.
Therefore, by Proposition \ref{Dic2} part (2) it follows that
$$\gcd(G_{mq+r}^{*}(x),G_{m}^{*}(x))=\gcd (G_{m}^{*}(x)T(x)+f(x)G_{m_1}^{*}(x),G_{m}^{*}).$$ Now it is easy to see that
\[\gcd (G_{m}^{*}(x)T(x)+f(x)G_{m_1}^{*}(x),G_{m}^{*})=\gcd (f(x) G_{m_1}^{*}(x),G_{m}^{*}(x)).\]
Since $\gcd(G_{m}^{*}(x),f(x))=1$, by Proposition  \ref{prop2;1} part (1) we have
$$\gcd (f(x) G_{m_1}^{*}(x),G_{m}^{*}(x))=\gcd (G_{m_1}^{*}(x),G_{m}^{*}(x)).$$
This completes the proof.
\end{proof}

\begin{theorem} \label{second:main:thm}
Let $ G_{n}^{*}(x)$ be a GFP of the Lucas type.
If $m$ and $n$ are positive integers and $d=\gcd(m,n)$, then
\[ \gcd(G_{m}^{*}(x),G_{n}^{*}(x))=
		\begin{cases} G_{d}^{*}(x),					 & \mbox{ if }\;   E_{2}(m)= E_{2}(n);\\
                      \gcd(G_{d}^*(x),G_{0}^*(x)),   & \mbox{ otherwise}.
           \end{cases}
\]
\end{theorem}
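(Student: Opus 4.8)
The plan is a Euclidean-algorithm induction on $\max(m,n)$ with Lemma \ref{fundamental} as the reduction step, carrying along the $2$-adic data at each stage. Write $d=\gcd(m,n)$ and $m=dm'$, $n=dn'$ with $\gcd(m',n')=1$. Since coprime integers cannot both be even, the condition $E_2(m)=E_2(n)$ holds if and only if $m'$ and $n'$ are both odd; this reformulation is exactly what the induction will track. Throughout, assume $m\le n$.

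The base case is $m\mid n$, say $n=mq$, so that $d=m$. If $q$ is odd then $E_2(m)=E_2(n)$, and Proposition \ref{propiedadDivision} (equivalently Corollary \ref{divisity:property:firstype}) gives $\gcd(G_m^*,G_n^*)=G_m^*=G_d^*$. If $q$ is even, write $q=2^su$ with $u$ odd and $s\ge 1$ and put $r'=mu$; since $u$ is odd we have $E_2(r')=E_2(m)$ and $m\mid r'$, so Proposition \ref{propiedadDivision} yields $G_m^*\mid G_{r'}^*$. Applying the doubling identity of Proposition \ref{Dic2} part (3) (part (1) when $s=1$) to $n=2^sr'$ writes $G_n^*$ as a $G_m^*$-multiple plus $(2/\alpha)(g(x))^{2^{s-1}r'}$; recalling $2/\alpha=p_0(x)=G_0^*(x)$, Lemma \ref{Dic1} reduces the gcd to $\gcd\bigl((g(x))^{2^{s-1}r'}G_0^*(x),G_m^*(x)\bigr)$, and then $\gcd(g(x),G_m^*(x))=1$ (Lemma \ref{gcdlemmas} part (3)) together with Proposition \ref{prop2;1} part (1) strips the power of $g$, giving $\gcd(G_m^*,G_n^*)=\gcd(G_0^*,G_m^*)=\gcd(G_d^*,G_0^*)$, exactly the value in the $E_2(m)\ne E_2(n)$ branch.

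For the inductive step suppose $m\nmid n$ and write $n=mq+r$ with $0<r<m$. Lemma \ref{fundamental} gives $\gcd(G_n^*,G_m^*)=\gcd(G_{m_1}^*,G_m^*)$ with $m_1=m-r$ when $q$ is odd and $m_1=r$ when $q$ is even; in both cases $0<m_1<m$, so the pair $(m_1,m)$ has strictly smaller maximum and the induction hypothesis applies. Two index facts then close the argument. First, $\gcd(m_1,m)=d$, because $\gcd(m,r)=\gcd(m,n)=d$ and $\gcd(m,m-r)=\gcd(m,r)$, so the hypothesis again produces a value built from $G_d^*$. Second, the $2$-adic dichotomy is preserved: since $d\mid r$, divide through by $d$; when $m/d$ is odd one has $n/d\equiv q+r/d\pmod 2$, and a direct check in each parity of $q$ gives $n/d\equiv m_1/d\pmod 2$, whereas when $m/d$ is even both ``$E_2$-equal'' conditions fail by coprimality of $m_1/d$ and $m/d$. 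Hence $E_2(m_1)=E_2(m)\iff E_2(n)=E_2(m)$, and so the inductive value ($G_d^*$ or $\gcd(G_d^*,G_0^*)$) is precisely the value claimed for the pair $(m,n)$.

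All the divisibility identities are supplied in advance by Propositions \ref{Dic2}, \ref{propiedadDivision} and Lemma \ref{fundamental}, so the analytic content is essentially settled; the delicate part is the bookkeeping in the inductive step. The hard point will be verifying that the single reduction $n\mapsto m_1$ simultaneously preserves the index-gcd $d$ and the parity invariant that selects between $G_d^*$ and $\gcd(G_d^*,G_0^*)$, since the formula for $m_1$ switches with the parity of $q$ and the congruence $n/d\equiv m_1/d\pmod 2$ must be confirmed on both branches.
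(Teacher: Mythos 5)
Your proof is correct, and it uses the same raw ingredients as the paper's: Lemma \ref{fundamental} as the Euclidean reduction step, Proposition \ref{propiedadDivision} for divisibility with odd quotient, and Proposition \ref{Dic2} parts (1)/(3) combined with Lemma \ref{Dic1} and Lemma \ref{gcdlemmas} part (3) to peel off the $G_0^*(x)\,g(x)^{k}$ remainder in the even-quotient case. Where you genuinely depart from the paper is in the architecture. The paper proves the two branches of the theorem by two separate descents: for $E_2(m)=E_2(n)$ it builds an explicit decreasing chain $m>m_1>\cdots>m_k=d$ and argues termination by hand, and for $E_2(m)\neq E_2(n)$ it re-runs the descent inside a lattice of cases ($E_2(n)<E_2(m)$ versus $E_2(m)<E_2(n)$, $r=0$ versus $r\neq 0$, $m_1\mid m$ versus $m_1\nmid m$, $q_2$ odd versus even). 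You instead run a single strong induction on $\max(m,n)$, confine all the identity work to the base case $m\mid n$, and carry one invariant through the reduction $(m,n)\mapsto(m_1,m)$: it preserves the index gcd $d$ \emph{and} the truth value of ``the two indices have equal $E_2$,'' which you verify by the clean parity computation on $m/d$, $n/d$, $m_1/d$. That invariant is exactly what the paper needs but never isolates: its Case 2 ends with an ``it is easy to see'' that silently requires the $E_2$-inequality to survive the whole descent, and its subcase ``$q_2$ odd'' (where it derives $\gcd=G_d^*(x)$ and then asserts this equals the constant $\gcd(G_d^*(x),G_0^*(x))$) is in fact vacuous, precisely because your invariant shows $E_2(m_1)=E_2(m)$ cannot occur once $E_2(m)\neq E_2(n)$. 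So the two proofs trade on the same lemmas, but yours buys a well-founded induction in place of an informal iteration, eliminates the case explosion, and fills in (indeed, explains away) the murkiest step of the published argument.
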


\begin{proof}
First we prove the result for $E_{2}(n)=E_{2}(m)$. From the Euclidean algorithm we know that there are non-negative
integers $q$ and $r$ such that $n=mq+r$ with $r<m$. Let  $d=\gcd(m,n)$. Clearly, if $r=0$, then $d=m$. Therefore, the
result holds by Proposition  \ref{propiedadDivision}.

We suppose that
$r\neq 0$. If we take $m_1$ as in Lemma \ref{fundamental}, then
$$
\gcd (G_{n}^{*}(x),G_{m}^{*}(x))=\gcd(G_{mq+r}^{*}(x),G_{m}^{*}(x))=\gcd (G_{m_1}^{*}(x),G_{m}^{*}(x)).
$$
Let $M_1=\{m,m_1 \}$. Notice that $\gcd(m_1,m)=d$, $E_2(m)=E_2(m_1)$, and that $m_1<m$. Therefore, there are non-negative integers
$q_1$  and $r_1$ such that  $m=m_1q_1+r_1$ with $r_1<m_1$. Again, if  $r_1=0$, by Proposition \ref{propiedadDivision}
we obtain that 	
$
\gcd(G_{n}^{*}(x),G_{m}^{*}(x))=\gcd(G_{m}^{*}(x),G_{m_1}^{*}(x))=G_{d}^{*} (x).
$
If $r_1\neq 0$ we repeat the previous step and then we can guarantee that
$$
\gcd (G_{n}^{*}(x),G_{m}^{*}(x))=\gcd (G_{m_1}^{*}(x),G_{m}^{*}(x))=\gcd (G_{m_1}^{*}(x),G_{m_2}^{*}(x)),
$$
where
\[ m_2=\begin{cases}m_1-r_1, 	& \mbox{ if $q$ is odd};\\
   	  				r_1,  		& \mbox{ if } \mbox{ $q$ is even}.
             \end{cases}
\]
We repeat this procedure $t$ times until we obtain the ordered decreasing sequence $m>m_1>m_2>\cdots >m_t\geq d$ such that
$E_2(m)=E_2(m_t)$ and $\gcd(m_t,m_{t-1})=d$, where
\[ m_t=\begin{cases}m_{t-1}-r_{t-1}, 	& \mbox{ if $q$ is odd};\\
   	  				r_{t-1},  			& \mbox{ if } \mbox{ $q$ is even}.
             \end{cases}
\]
Notice that
$M_t=\{m, m_1, m_2, \ldots, m_t\} =M_{t-1}\cup\{m_t\}$ is an ordered set of natural numbers, therefore there is a minimum element.
Since $M_t$ is constructed with  a sequence of decreasing positive integers, there must be an integer $k$ such that
$M_{t}\subset M_k$ for all $t<k$ and $M_{k+1}$ is  undefined. Thus, the procedure ends with $M_k$.  Note that
$m>m_1>m_2>\cdots >m_k \geq d$ such that $E_2(m)=E_2(m_k)$ and $\gcd(m_k,m_{k-1})=d$.

{\bf Claim}. The minimum element of $M_k$ is $m_k=d$ and $m_{k}\mid m_{k-1}$.

Proof of claim. From the division algorithm we know that there are non-negative integers $q_k$ and $r_k$ such that
$m_{k-1}=m_kq_k+r_k$ with $r_k<m_k$. If $r_k\neq 0$ we can repeat the procedure described above to obtain a new set
$M_{k+1}$ with $M_{k} \subset M_{k+1}$.  That is a contradiction. Therefore, $r_k=0$. So, $m_{k-1}=m_kq_k$. This implies that
$\gcd(m_k,m_{k-1})=d$. Thus, $m_k=d$. This proves the claim.

The Claim and Proposition \ref{propiedadDivision}  allow us to conclude that
$$
\gcd (G_{n}^{*}(x),G_{m}^{*}(x))=\gcd (G_{m}^{*}(x),G_{m_1}^{*}(x))=\cdots=\gcd (G_{m_{k-1}}^{*}(x),G_{m_{k}}^{*}(x))=G_{d}^{*}.
$$

We now prove by cases that: if  $E_{2}(n)\neq E_{2}(m)$ and  $d=\gcd(n,m)$, then
$\gcd (G_{n}^{*}(x),G_{m}^{*}(x))=\gcd (G_{d}^{*}(x),G_{0}^{*}(x))$.

\textbf{Case 1.} Suppose that $m<n$ and that $E_2(n)<E_2(m)$. From the division algorithm there are two non-negative
integers $q$ and $r$ such that $n=mq+r$ with $r<m$. Let $m_1= m-r$ when $q$  is odd  and  $m_1=r$ when $q$ is even
(as defined in Lemma \ref{fundamental}). Since $n=mq+r$ and $E_2(n)<E_2(m)$, we have $r\neq 0$. It is
easy to see that $E_2(n)=E_2(r)$, and therefore $E_2(n)=E_2(m_1)$. This and Lemma \ref{fundamental} imply that $
\gcd (G_{n}^{*}(x),G_{m}^{*}(x))=\gcd (G_{m}^{*}(x),G_{m_1}^{*}(x)).
$
Since $E_2(m_1)=E_2(n)<E_2(m)$ and  $m_1<m$, the criteria for the Case 2 are satisfied here, so the proof of this case may be
completed as we are going to do in Case 2 below.

\textbf{Case 2.} Suppose that $E_2(m)<E_2(n)$ and that $m<n$. From the division algorithm we know
that there are two non-negative integers $r$ and $q$ such that  $n=mq+r$ with $ r<m$. If $r=0$, then $q$
must be even (because $E_2(m)<E_2(n)$). Let $k=E_2(q)$ and we consider two subcases on $k$.

	\textbf{Subcase 1.} If $k=1$, then $q=2t$ where $t$ is odd. Therefore,  by Proposition
	\ref{Dic2} part (1) we have
	$
	G_{n}^{*}(x)=G_{2mt}^{*}(x)=\alpha (G_{mt}^{*}(x))^2+(-1)^{mt+1}(G_0 ^*(x))(-g(x))^{mt}.
	$
	This, Proposition \ref{propiedadDivision}, Lemma \ref{gcdlemmas} part (3), and Lemma \ref{Dic1} imply that

    \begin{eqnarray*}
    \gcd (G_{n}^{*}(x),G_{m}^{*}(x))&=& \gcd\left(\alpha (G_{mt}^{*}(x))^2+(-1)^{mt+1}G_0 ^*(x)(-g(x))^{mt}, G_{m}^{*}(x)\right) \\
                            &=& \gcd((-1)^{mt+1}G_0 ^*(x)(-g(x))^{mt},G_{m}^{*}(x)) \\
														&=& \gcd(G_0 ^*(x),G_{m}^{*}(x))\\
														&=& \gcd(G_0 ^*(x),G_{d}^{*}(x)).
    \end{eqnarray*}
	
		\textbf{Subcase 2.}
    If $k>1$, then $q=2^kt$ where $t$ is odd. Therefore, by Proposition \ref{Dic2}
	part (3), there is a polynomial $T_k(x)$ such that
	$$
	G_{n}^{*}=G_{2^kmt}^{*}=G_{mt}^{*}(x)T_k(x)+G_0 ^*(x)g^{2^{k-1}mt}(x).
	$$
	This, Proposition \ref{propiedadDivision}, Lemma \ref{gcdlemmas} part (3), and Lemma \ref{Dic1} imply that
   \begin{eqnarray*}
    \gcd (G_{n}^{*}(x),G_{m}^{*}(x))&=& \gcd(G_{mt}^{*}(x)T_k(x)+G_0 ^*(x) g^{2^{k-1}mt}(x) , G_{m}^{*}(x)) \\
                            &=& \gcd(G_0 ^*(x) g^{2^{k-1}mt}(x) ,G_{m}^{*}(x)) \\
                            &=&\gcd(G_0 ^*(x) ,G_{m}^{*}(x))\\
														&=& \gcd(G_0 ^*(x),G_{d}^{*}(x)).
    \end{eqnarray*}

Now suppose that $r\neq 0$. This and Lemma \ref{fundamental} imply that
$$
\gcd (G_{n}^{*}(x),G_{m}^{*}(x))=\gcd (G_{m}^{*}(x),G_{m_1}^{*}(x)),
$$
where  $m_1= m-r$ when $q$  is odd  and  $m_1=r$ when $q$ is even (as defined in Lemma \ref{fundamental}).
Therefore, $m_1<m<n$ and $\gcd(m,n)=\gcd(m,m_1)=d$.

We analyze both the case in which $m_1\mid m$ and the case in which $m_1 \nmid m$. Suppose that $m=m_1 q_2$
and we consider two cases for $q_2$.

\textbf{Subcase $q_2$ is odd.} If $q_2$ is odd then we have  $E_2(m_1)=E_2(m)$. Therefore, by Proposition \ref{propiedadDivision}
we obtain:
\[\gcd(G_{m}^{*}(x),G_{n}^{*}(x))=\gcd(G_{m}^{*}(x),G_{m_1}^{*}(x))=G_{d}^{*}(x)\; \text{ and }\; E_{2}(G_{d}^{*}(x))<E_{2}(G_{n}^{*}(x)).\]
This implies that $\gcd(G_{m}^{*}(x),G_{n}^{*}(x))=\gcd(G_{d}^{*}(x),G_{0}^{*}(x))$.

\textbf{Subcase $q_2$ is even.} If $q_2$ is even, then $E_2(m_1)<E_{2}(m)$. Now it is easy to see that  $\gcd(G_{m}^{*}(x),G_{n}^{*}(x))=\gcd(G_{m_1}^{*}(x),G_{0}^{*}(x))=\gcd(G_{d}^{*}(x),G_{0}^{*}(x))$.

Now suppose that $m_1 \nmid m$. Therefore there are two non-negative integers
$r_2$ and $q_2$ such that $m=m_1q_2+r_2$ where $0<r_2<m_1$. From Lemma \ref{fundamental} we guarantee
that we can find $m_2$ such that
$$m_2<m_1, \gcd(m_1,m_2)=d \quad \text{ and } \quad \gcd(G_{m_1}^{*}(x),G_{m_2}^{*}(x))=\gcd(G_{m_1}^{*}(x),G_{m}^{*}(x)).$$
In this way we construct an ordered set of integers $M_t= \{n, m, m_1, m_2, \ldots, m_t\}$
where $n> m>m_1>\cdots >m_t$ such that $\gcd(m_j,m_{j-1})=d$ and
$$
\gcd(G_{n}^{*}(x),G_{m}^{*}(x))=\gcd(G_{m_1}^{*}(x),G_{m}^{*}(x))=\cdots=
\gcd(G_{m_j}^{*}(x),G_{m_{j-1}}^{*}(x)).
$$
From Lemma \ref{fundamental} we know that $n> m>m_1>\cdots >m_j$ ends only if $r_j=0$. Since
$M_j= \{n, m, m_1, m_2, \ldots, m_j\}$ is an ordered sequence of natural numbers, it has a minimum element $m_j$.
 Therefore, $m_{j}\mid m_{j-1}$. It is easy then to see that
$$\gcd(G_{n}^{*}(x),G_{m}^{*}(x))=\gcd(G_{m_j}^{*}(x),G_{m_{j-1}}^{*}(x)).$$ This is equivalent to
$\gcd(G_{n}^{*}(x),G_{m}^{*}(x))=\gcd(G_{d}^{*}(x),G_{0}^{*}(x)),
$
which completes the proof.
\end{proof}

\begin{corollary}
Let $d_{k}=\gcd(G_0 ^*(x),G_{k} ^{*}(x))$ where  $G_{k}^{*}(x)$ is a GFP of the Lucas type.
If $m$ and $n$ are positive integers such that $E_{2}(n)\ne E_{2}(n)$, then the following properties hold
\begin{enumerate}[(1)]
  \item Suppose that there is an integer  $k'>0$  such that $d_{k'}=2$.
  If  $r$ is the minimum positive integer such that $d_{r}=2$, then
    \[ \gcd(G_{m}^{*}(x),G_{n}^{*}(x))= \begin{cases} 	2, 	& \mbox{ if }\;   r|\gcd(m,n);\\
                                                    	1,   & \mbox{ otherwise}.
                                        \end{cases}
    \]
  \item If $d_{k}\ne 2$ for every positive integer $k$, then $\gcd(G_{m}^{*}(x),G_{n}^{*}(x))=1$.
\end{enumerate}

\end{corollary}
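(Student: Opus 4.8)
The plan is to reduce the entire statement to Theorem \ref{second:main:thm} together with Proposition \ref{propiedadDivisioncase2}; no new computation should be needed. First I would set $\delta=\gcd(m,n)$ and invoke the ``otherwise'' branch of Theorem \ref{second:main:thm}. Since the hypothesis here is precisely $E_2(m)\neq E_2(n)$, that theorem gives at once
\[
\gcd(G_m^{*}(x),G_n^{*}(x))=\gcd(G_0^{*}(x),G_\delta^{*}(x))=d_\delta .
\]
Thus the whole corollary is just a matter of determining the value of $d_\delta$.

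Next I would record the basic size restriction on $d_\delta$. Recall that for a GFP of Lucas type, $G_0^{*}(x)=p_0(x)$ is a constant with $|p_0(x)|=1$ or $2$ (the standing assumption following the Binet formula (\ref{bineformulados})). Hence $d_\delta=\gcd(p_0(x),G_\delta^{*}(x))$ is a constant divisor of $p_0(x)$, so $d_\delta\in\{1,2\}$ in every case. Consequently $\gcd(G_m^{*}(x),G_n^{*}(x))$ is always $1$ or $2$, and the only remaining task is to decide exactly when it equals $2$.

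For part (1) I would assume, as in the hypothesis, that some $d_{k'}=2$ and let $r$ be the minimum positive integer with $d_r=2$. Proposition \ref{propiedadDivisioncase2} states precisely that, for this minimal $r$, one has $r\mid N$ if and only if $d_N=2$. Applying this with $N=\delta=\gcd(m,n)$ yields $d_\delta=2$ exactly when $r\mid\gcd(m,n)$; in the complementary case $d_\delta\in\{1,2\}$ forces $d_\delta=1$. Combining with the first display gives the claimed two-line case split. For part (2), if $d_k\neq 2$ for every positive integer $k$, then in particular $d_\delta\neq 2$, and since $d_\delta\in\{1,2\}$ this forces $d_\delta=1$, so $\gcd(G_m^{*}(x),G_n^{*}(x))=1$.

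The only point requiring care is bookkeeping of hypotheses rather than any genuine obstacle: I must check that Theorem \ref{second:main:thm} is applied in its $E_2(m)\neq E_2(n)$ branch (which is exactly our standing assumption, modulo the evident typo $E_2(n)\neq E_2(n)$ in the statement, which should read $E_2(m)\neq E_2(n)$), and that the $r$ fed into Proposition \ref{propiedadDivisioncase2} is indeed the \emph{minimal} index with $d_r=2$, which is how $r$ is defined in part (1). With those checks in place the corollary is simply a repackaging of the ``otherwise'' case of Theorem \ref{second:main:thm}.
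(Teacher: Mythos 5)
Your proposal is correct and follows exactly the route the paper intends: the paper's own proof consists of the single line ``straightforward from Proposition \ref{propiedadDivisioncase2},'' which implicitly presupposes precisely your reduction via the ``otherwise'' branch of Theorem \ref{second:main:thm} and the observation that $d_\delta\in\{1,2\}$ since $|G_0^{*}(x)|\in\{1,2\}$. You have simply made explicit (including the typo fix $E_2(m)\neq E_2(n)$) what the paper leaves unstated.
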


The proof of this corollary is straightforward from Proposition \ref{propiedadDivisioncase2}.

\begin{proposition}\label{gcd:fibonaccilucas:samegcdlucaslucas}
Let $G_{n}^{*}(x)$ and $G_{n}^{\prime}(x)$ be equivalent GFPs. If $m$ and $n$ are positive integers, then
\begin{enumerate}[(1)]
   \item $\gcd(G^{\prime}_{m+n+1}(x),G_{n}^{*}(x))=\gcd(G_{m+1}^{*}(x),G_{n}^{*}(x)),$
   \item if $m>n$, then $\gcd(G^{\prime}_{m-n+1}(x),G_{n}^{*}(x))=\gcd(G_{m+1}^{*}(x),G_{n}^{*}(x)),$
   \item if $m<n$, then $\gcd(G^{\prime}_{n-m+1}(x),G_{n}^{*}(x))=\gcd(G_{m-1}^{*}(x),G_{n}^{*}(x)).$
\end{enumerate}
\end{proposition}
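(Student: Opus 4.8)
The plan is to prove part (1) first, since it carries all the real content, and then to deduce parts (2) and (3) from it by shifting the Fibonacci-type index by $2n$ while keeping $G_n^{*}(x)$ fixed. For part (1) I would start from the one available formula that expresses a Fibonacci-type term through two Lucas-type factors, namely Proposition~\ref{divisity:Hogat:property2} part (1): $(a-b)^2 G_{m+n+1}^{'}(x)=\alpha^2 G_{m+1}^{*}(x)G_{n+1}^{*}(x)+\alpha^2 g(x)G_{m}^{*}(x)G_{n}^{*}(x)$. Taking $\gcd(\cdot,G_n^{*}(x))$ of both sides annihilates the last summand because it carries the factor $G_n^{*}(x)$, so $\gcd((a-b)^2 G_{m+n+1}^{'}(x),G_n^{*}(x))=\gcd(\alpha^2 G_{m+1}^{*}(x)G_{n+1}^{*}(x),G_n^{*}(x))$. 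On the right I would strip off the coprime factors one at a time, using $\gcd(\alpha,G_n^{*}(x))=1$ (noted in Section~\ref{General:Fibonacci:Polynomial}), $\gcd(g(x),G_n^{*}(x))=1$ (Lemma~\ref{gcdlemmas} part (3)), and $\gcd(G_{n+1}^{*}(x),G_n^{*}(x))=1$ from Proposition~\ref{gcddistance1;2} part (1) (consecutive indices are never both odd); this collapses the right-hand side to $\gcd(G_{m+1}^{*}(x),G_n^{*}(x))$.

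The hard part is the spurious discriminant factor $(a-b)^2=d^2(x)+4g(x)$ on the left: the whole argument works only if $\gcd((a-b)^2,G_n^{*}(x))=1$, so that this factor may be dropped. For this I would invoke the Binet identity $\alpha^2\,(G_n^{*}(x))^2-(a-b)^2\,(G_n^{'}(x))^2=4(-g(x))^n$, which is immediate from $(a^n\pm b^n)^2$ together with $ab=-g(x)$; it forces any common divisor $\delta$ of $(a-b)^2$ and $G_n^{*}(x)$ to divide $4(-g(x))^n$, and since $\gcd(\delta,g(x))=1$ we get $\delta\mid 4$, so $\delta$ is a constant. Ruling out an even $\delta$ is the delicate step and splits on $|p_0(x)|$: if $|p_0(x)|=1$ then $\alpha=2$, and $\gcd(\alpha,G_n^{*}(x))=1$ forces $G_n^{*}(x)$ to be nonzero modulo $2$; if $|p_0(x)|=2$ then $\gcd(p_0(x),d(x))=1$ forces $d(x)$, hence $d^2(x)+4g(x)$, to be nonzero modulo $2$. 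In either case $\delta=1$, which removes $(a-b)^2$ and finishes part (1). (My proof in fact needs part (1) only for the parameter $\ge 0$, which is fine since Proposition~\ref{divisity:Hogat:property2} allows $m\ge 0$.)

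For part (2), with $m>n$, I would read Proposition~\ref{divisity:Hogat:property1} part (2) with indices $(m+1,n)$, giving $G_{m+n+1}^{'}(x)=\alpha G_{m+1}^{'}(x)G_{n}^{*}(x)-(-g(x))^{n}G_{m-n+1}^{'}(x)$. Reducing modulo $G_n^{*}(x)$ and using $\gcd(g(x),G_n^{*}(x))=1$ yields $\gcd(G_{m-n+1}^{'}(x),G_n^{*}(x))=\gcd(G_{m+n+1}^{'}(x),G_n^{*}(x))$, and the latter equals $\gcd(G_{m+1}^{*}(x),G_n^{*}(x))$ by part (1); this is exactly part (2). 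For part (3), with $m<n$, I would instead use Proposition~\ref{divisity:Hogat:property1} part (3) with indices $(n,m-1)$, namely $G_{m+n-1}^{'}(x)=\alpha G_{m-1}^{'}(x)G_{n}^{*}(x)+(-g(x))^{m-1}G_{n-m+1}^{'}(x)$, which gives $\gcd(G_{n-m+1}^{'}(x),G_n^{*}(x))=\gcd(G_{m+n-1}^{'}(x),G_n^{*}(x))$; then part (1) applied with parameter $m-2$ identifies this with $\gcd(G_{m-1}^{*}(x),G_n^{*}(x))$.

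The only bookkeeping I expect beyond this is the boundary case $m=1$ of part (3), where $G_{m-1}^{*}(x)=G_0^{*}(x)$ falls outside the range of part (1) (whose right-hand side is always $G_{k+1}^{*}(x)$ with $k\ge 0$). I would handle it directly from the Binet formula, using $G_{2n}^{'}(x)=\alpha G_n^{'}(x)G_n^{*}(x)$ and the identity $\alpha^2(G_n^{*}(x))^2-(a-b)^2(G_n^{'}(x))^2=4(-g(x))^n$ to compare $\gcd(G_n^{'}(x),G_n^{*}(x))$ with $\gcd(G_0^{*}(x),G_n^{*}(x))=\gcd(p_0(x),G_n^{*}(x))$; this is precisely the $1$-or-$2$ regime already governed by Theorem~\ref{second:main:thm}.
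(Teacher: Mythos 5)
Your proposal is correct, and its skeleton coincides with the paper's: part (1) is extracted from Proposition \ref{divisity:Hogat:property2} part (1) by discarding the summand carrying the factor $G_{n}^{*}(x)$ and stripping the coprime factors $\alpha^{2}$ and $G_{n+1}^{*}(x)$ (via Propositions \ref{gcddistance1;2} and \ref{prop2;1}), and parts (2) and (3) are folded back into part (1) through Proposition \ref{divisity:Hogat:property1} parts (2) and (3) --- exactly the reductions the paper performs. The genuine difference is how you dispose of the discriminant factor. The paper proves $\gcd((a-b)^{2},G_{n}^{*}(x))=1$ by induction on $n$, using Proposition \ref{divisity:Hogat:property2} part (1) with $m=n=k$ to push coprimality from $G_{k}^{*}(x)$ to $G_{k+1}^{*}(x)$, with a separate parity argument at the base case $G_{2}^{*}(x)$; you instead get it in one stroke from the identity $\alpha^{2}(G_{n}^{*}(x))^{2}-(a-b)^{2}(G_{n}^{\prime}(x))^{2}=4(-g(x))^{n}$, which forces any common divisor to be a constant dividing $4$, after which a parity split on $|p_{0}(x)|$ (the same kind of argument the paper uses only at its base case) kills the even possibilities. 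Your route is shorter and non-inductive, and it has a second payoff: it handles the boundary case $m=1$ of part (3), which the paper's proof silently skips --- the paper closes part (3) by citing part (1) with parameter $m-2$, which is out of range when $m=1$. Your treatment of that case is only sketched, but the tools you name do suffice: reducing your identity modulo $4$, together with the fact that $d(x)$ is odd when $|p_{0}(x)|=2$ (from $\gcd(p_{0}(x),d(x))=1$), shows that $2\mid G_{n}^{*}(x)$ forces $2\mid G_{n}^{\prime}(x)$, and $\gcd(p_{0}(x),g(x))=1$ rules out a common factor of $4$, giving $\gcd(G_{n}^{\prime}(x),G_{n}^{*}(x))=\gcd(G_{0}^{*}(x),G_{n}^{*}(x))$ as required; writing those few lines out would make the proof complete and, on this point, more careful than the paper's own.
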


\begin{proof} We prove part (1) by induction. Let $S(n)$ be the statement (recall that $a-b=a(x)-b(x)$):
for every $n\ge 1$ $\gcd((a-b)^2,G_{n}^{*}(x))=1$. Recall that in a GFP of Lucas type
$\gcd(p_0(x), p_1(x))=\gcd(p_0(x), d(x))=1$ and that $2p_1(x)=p_0(x)d(x)$. From this and
Proposition \ref{divisity:Hogat:property2} part (1) with $m=n=0$, it is easy to see that
$\gcd((a-b)^2,G_{1}^{*}(x))=1$. We now prove that
$S(2)$ is also true. It is easy to see that
\begin{eqnarray*}
\gcd((a-b)^2,G_{2}^{*}(x))&=&\gcd(a^2(x)+b^2(x)-2ab,G_2^{*}(x))\\
&=&\gcd(G_{2}^{*}(x)+2g(x),G_{2}^{*}(x))\\
&=&\gcd(2g(x),G_{2}^{*}(x)).
\end{eqnarray*}

From Lemma (\ref{gcdlemmas}) part (3) we know that $\gcd(g(x),G_{2}^{*}(x))=1$. This implies that either
$$ \gcd((a-b)^2,G_{2}^{*}(x))=1 \; \text{ or } \; \gcd((a-b)^2,G_{2}^{*}(x))=2.$$
If  $\gcd((a-b)^2,G_{2}^{*}(x))=2$, then  $2\mid \left(d^2(x)+4g(x)\right)$ and $2\mid G_{2}^{*}(x)$. So,
$2\mid d^2(x)$ and  $2\mid G_{2}^{*}(x)$. From Lemma (\ref{gcdlemmas}) part (2) we know that $\gcd(d(x),G_{2}^{*}(x))=1$.
This implies that $2 \mid 1$. Therefore, $\gcd((a-b)^2,G_{2}^{*}(x))=1$.  This proves $S(2)$.

Suppose that $S(k)$  is true. Then
$\gcd((a-b)^2,G_{k}^{*}(x))=1$. We now prove that  $S(k+1)$ is true. Suppose that
$\gcd((a-b)^2,G_{k+1}^{*}(x))=r(x)$.  Therefore,  $r(x)\mid(a-b)^2$ and $r(x)\mid G_{k+1}^{*}(x)$.
So, $r(x) \mid [(a-b)^2G^{\prime} _{2k+1}(x)-\alpha^2(G^*_{k+1}(x))^2]$.
From Proposition \ref{divisity:Hogat:property2}  part (1) we know that if $m=n=k$, then
$$
(a-b)^2G^{\prime} _{k+k+1}(x)=\alpha^2 G_{k+1}^{*}(x)G_{k+1}^{*}(x)+\alpha^2g(x)G_{k}^{*}(x)G_{k}^{*}(x).
$$
Thus,
$
(a-b)^2G^{\prime} _{2k+1}(x)-\alpha^2(G^{*}_{k+1}(x))^2=\alpha^2g(x)(G^{*}_k (x))^2.
$
This implies that $$r(x) \text{ divides } \alpha^2 g(x)(G^{*} _k(x))^2.$$
Since $|\alpha| = 1$ or $2$, from the definition of GFPs and Proposition \ref{gcddistance1;2} it is easy to see that
$\gcd(\alpha,g(x))=1$. We know that $\gcd(\alpha, G_n)=1$ for every $n$. So, $\gcd(\alpha, r(x))=1$.
We recall that from Lemma (\ref{gcdlemmas}) part (3), that $\gcd(g(x),G_{k+1}^{*}(x))=1$.
This and  $r(x)\mid G_{k+1}^{*}(x)$ imply that $\gcd(r(x),g(x))=1$. Now it is easy to see that
$r(x)\mid (G^{*} _{k}(x))^2$.  Since $\gcd((a-b)^2,G_{k}^{*}(x))=1$ and $r(x)\mid (a-b)^2$,
we have   $r(x)=1$. This proves that $S(k+1)$ is true. That is, $\gcd((a-b)^2,G_{n}^{*}(x))=1$.

We now prove that $\gcd(G^{\prime}_{m+n+1}(x),G_{n}^{*}(x))=\gcd(G_{m+1}^{*}(x),G_{n}^{*}(x)).$
Proposition \ref{divisity:Hogat:property2} part (1), implies that
$\gcd((a-b)^2G^{\prime}_{m+n+1}(x),G_{n}^{*}(x))$ is equal to
$$\gcd(\alpha^{2}G_{m+1}^{*}(x)G_{n+1}^{*}(x)+\alpha^{2}g(x)G_{m}^{*}(x)G_{n}^{*}(x),G_{n}^{*}(x)).$$ Therefore,
$$\gcd((a-b)^2G^{\prime}_{m+n+1}(x),G_{n}^{*}(x))=\gcd(\alpha^{2} G_{m+1}^{*}(x)G_{n+1}^{*}(x),G_{n}^{*}(x)).$$
Proposition  \ref{gcddistance1;2} and $\gcd (\alpha, G_{n+1})=1$ imply that
$\gcd(\alpha^{2} G_{n+1}^{*}(x),G_{n}^{*}(x))=1$. Therefore, by Proposition \ref{prop2;1} part (2) we have
$$\gcd(G_{m+1}^{*}(x)G_{n+1}^{*}(x),G_{n}^{*}(x))=\gcd(G_{m+1}^{*}(x),G_{n}^{*}(x)).$$
This implies that
$$
\gcd((a-b)^2G^{\prime}_{m+n+1}(x),G_{n}^{*}(x))=\gcd(G_{m+1}^{*}(x),G_{n}^{*}(x)).
$$
This and $\gcd((a-b)^2,G_{n}^{*}(x))=1$ imply that
$$\gcd(G^{\prime}_{m+n+1}(x),G_{n}^{*}(x))=\gcd(G_{m+1}^{*}(x),G_{n}^{*}(x)).$$

Proof of part (2).  From Lemma \ref{gcdlemmas} part (3) it is easy to see that
$$\gcd(G_{m-n+1}^{\prime}(x), G_{n}^{*}(x))=\gcd((g(x))^n G_{m+1-n}^{\prime}(x), G_{n}^{*}(x)).$$
This and Proposition
\ref{divisity:Hogat:property1}  part (2) (after interchanging the roles of $m$ and $n$) imply that
$$\gcd(G_{m-n+1}^{\prime}(x), G_{n}^{*}(x))$$ is equal to
\[\gcd(\alpha G_{m+1}^{\prime}(x)G_{n}^{*}(x)-G_{m+1+n}^{\prime}(x),G_{n}^{*}(x))=\gcd(G_{m+n+1}^{\prime}(x), G_{n}^{*}(x)).\]
The conclusion follows from part (1).

Proof of part (3).  From Lemma \ref{gcdlemmas} part (3) it is easy to see that
\[\gcd(G_{n-m+1}^{\prime}(x), G_{n}^{*}(x)) =\gcd((-g(x))^{m-1} G_{n-(m-1)}^{\prime}(x), G_{n}^{*}(x)).\]
This and Proposition \ref{divisity:Hogat:property1}  part (3) imply that $\gcd(G_{m-n+1}^{\prime}(x), G_{n}^{*}(x))$
is equal to
\[ \gcd(G_{n+m-1}^{\prime}(x)-\alpha G_{m-1}^{\prime}(x)G_{n}^{*}(x),G_{n}^{*}(x))=\gcd(G_{n+(m-2)+1}^{\prime}(x), G_{n}^{*}(x)).\]
The conclusion follows from part (1).
\end{proof}

\begin{theorem}\label{combine:gcd:Lucas:Fibobacci}
Let $G_{n}^{*}(x)$ and $G_{n}^{\prime}(x)$ be equivalent GFPs. If $m$ and $n$ are positive integers and $\gcd(m,n)=d$, then
\[ \gcd(G^{\prime}_{m}(x),G_{n}^{*}(x))=
	\begin{cases}
          G_{d}^{*}(x), 						  & \mbox{ if } E_2(m)>E_2(n); \\
          \gcd(G_{ d }^{*}(x),G_{0}^{*}(x)),	  & \mbox{ otherwise.}
    \end{cases}
\]
\end{theorem}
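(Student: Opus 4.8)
The plan is to reduce the mixed gcd $\gcd(G_{m}^{'}(x),G_{n}^{*}(x))$ to a gcd of two Lucas-type polynomials and then invoke Theorem~\ref{second:main:thm}. The bridge is Proposition~\ref{gcd:fibonaccilucas:samegcdlucaslucas} part~(2), which trades a Fibonacci-type index for a Lucas-type index at the cost of shifting the index by $n$, after which the already-established Lucas--Lucas formula does the rest. The only genuinely new ingredient is an elementary $2$-adic computation relating $E_2(m+n)$ to $E_2(m)$ and $E_2(n)$.

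First I would dispose of the degenerate case $m=1$. Since $\{G_n^{'}(x)\}$ is of Fibonacci type we have $p_1(x)=1$, so $G_1^{'}(x)=1$ and hence $\gcd(G_1^{'}(x),G_n^{*}(x))=1$. On the other hand, here $d=\gcd(1,n)=1$ and $E_2(1)=0\le E_2(n)$, so the claimed formula lands in the ``otherwise'' case and predicts $\gcd(G_1^{*}(x),G_0^{*}(x))=\gcd(p_1(x),p_0(x))=1$ by the standing hypothesis $\gcd(p_0(x),p_1(x))=1$ for Lucas type. The two values agree, so the case $m=1$ checks out directly.

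For $m\ge 2$ I would apply Proposition~\ref{gcd:fibonaccilucas:samegcdlucaslucas} part~(2) with Lucas index $n$ and with the proposition's first index taken to be $m+n-1$, noting that the hypothesis $m+n-1>n$ holds precisely because $m>1$. Since $(m+n-1)-n+1=m$ and $(m+n-1)+1=m+n$, this yields
\[
\gcd(G_{m}^{'}(x),G_{n}^{*}(x))=\gcd\bigl(G_{m+n}^{*}(x),G_{n}^{*}(x)\bigr),
\]
a gcd of two Lucas-type polynomials. Now Theorem~\ref{second:main:thm} applies to the right-hand side. Because $\gcd(m+n,n)=\gcd(m,n)=d$, it gives $G_{d}^{*}(x)$ when $E_2(m+n)=E_2(n)$ and $\gcd(G_{d}^{*}(x),G_{0}^{*}(x))$ otherwise.

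The remaining step, which is really the crux, is the elementary fact that for positive integers $m,n$ one has $E_2(m+n)=E_2(n)$ if and only if $E_2(m)>E_2(n)$. Writing $a=E_2(m)$ and $b=E_2(n)$ and factoring out the appropriate power of two, one sees that if $a>b$ then the odd part of $n$ survives after adding an even multiple, so $E_2(m+n)=b$; if $a=b$ the two odd parts sum to an even number, forcing $E_2(m+n)>b$; and if $a<b$ symmetry gives $E_2(m+n)=a<b$. Substituting this equivalence into the output of Theorem~\ref{second:main:thm} converts the condition ``$E_2(m+n)=E_2(n)$'' into ``$E_2(m)>E_2(n)$'', producing exactly the stated case split. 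I do not expect a real obstacle here: all the heavy lifting is done by Proposition~\ref{gcd:fibonaccilucas:samegcdlucaslucas} and Theorem~\ref{second:main:thm}, and the only care required is the index bookkeeping in the reduction together with the isolated treatment of $m=1$.
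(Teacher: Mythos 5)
Your proof is correct, and it reaches the theorem by a genuinely different decomposition than the paper's. Both arguments share the same two ingredients---Proposition \ref{gcd:fibonaccilucas:samegcdlucaslucas} to convert the mixed gcd into a Lucas--Lucas gcd, and Theorem \ref{second:main:thm} to evaluate it---but you deploy them differently. The paper runs a five-way case analysis ($E_2(m)>E_2(n)$ versus $E_2(m)\le E_2(n)$, each crossed with $m>n$ and $m<n$, plus the diagonal $m=n$), writing $m=n+l$ or $n=m+l$ and invoking parts (1) and (3) of the proposition to reduce to $\gcd(G_{l}^{*}(x),G_{n}^{*}(x))$; it uses part (2) only for $m=n$, where it takes the first index to be $2n-1$. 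You instead notice that this last trick works in general: applying part (2) once with first index $m+n-1$ (legitimate exactly when $m\ge 2$) yields the uniform identity $\gcd(G^{'}_{m}(x),G_{n}^{*}(x))=\gcd(G_{m+n}^{*}(x),G_{n}^{*}(x))$ irrespective of how $m$ compares to $n$; then $\gcd(m+n,n)=d$ and Theorem \ref{second:main:thm} finish, modulo the elementary equivalence that $E_2(m+n)=E_2(n)$ if and only if $E_2(m)>E_2(n)$, which you state and prove correctly---the paper uses the analogous $2$-adic facts for differences ($E_2(m-n)=E_2(n)$ when $E_2(m)>E_2(n)$, and its negation otherwise) implicitly and without proof. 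Your route buys uniformity and brevity: one identity replaces the case analysis, the only residue being the degenerate case $m=1$ (where part (2)'s hypothesis $m+n-1>n$ fails), which you dispose of correctly since $G_{1}^{'}(x)=1$ and $\gcd(G_{1}^{*}(x),G_{0}^{*}(x))=\gcd(p_{1}(x),p_{0}(x))=1$ under the standing Lucas-type hypotheses. Both proofs rest equally on the correctness of Proposition \ref{gcd:fibonaccilucas:samegcdlucaslucas} and Theorem \ref{second:main:thm}, so no rigor is lost in your shortcut.
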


\begin{proof} Suppose that $E_2(m)>E_2(n)$. We prove this part using cases.

 {\bf Case} $m>n$. Since $m>n$, there is a  positive integer $l$ such that $m=n+l$. Therefore,
$\gcd(G^{\prime}_{m}(x),G_{n}^{*}(x))=\gcd(G^{\prime}_{l-1+n+1}(x),G_{n}^{*}(x))$.
This and Proposition \ref{gcd:fibonaccilucas:samegcdlucaslucas} part (1) imply that
$\gcd(G^{\prime}_{m}(x),G_{n}^{*}(x))=\gcd(G_{l}^{*}(x),G_{n}^{*}(x))$. Since $E_2(m)>E_2(n)$ and $m=n+l$, we have  $E_2(l)=E_2(n)$.
This and Theorem  \ref{second:main:thm} imply that $\gcd(G_{l}^{*}(x),G_{n}^{*}(x))=G_{\gcd(l,n)}^{*}(x)$.
 From Lemma \ref{Dic1} it is easy to see that $\gcd(l,n)=\gcd(m,n)$. Thus, $\gcd(G_{l}^{*}(x),G_{n}^{*}(x))=G_{\gcd(m,n)}^{*}(x)$. Therefore, we have  $\gcd(G^{\prime}_{m}(x),G_{n}^{*}(x))=G_{\gcd(m,n)}^{*}(x)$.

 {\bf Case} $m<n$. The proof of this case is similar to the proof of Case $m>n$. It is enough to replace
 $m$ by $n-(l+1)$ in $\gcd(G^{\prime}_{m}(x),G_{n}^{*}(x))$, and then use Proposition \ref{gcd:fibonaccilucas:samegcdlucaslucas} part (3).

We now suppose that $E_2(m)\le E_2(n)$. Again we argue using cases.

 {\bf Case} $m>n$. So, there is a  positive integer $l$ such that $m= l+n$.  Therefore, by
 Proposition \ref{gcd:fibonaccilucas:samegcdlucaslucas} part (1) we have
 \[\gcd(G^{\prime}_{m}(x),G_{n}^{*}(x))=\gcd(G^{\prime}_{n+(l-1)+1}(x),G_{n}^{*}(x))=\gcd(G_{l}^{*}(x),G_{n}^{*}(x)).\]
 Note that if $m=n+l$ and $E_2(m)\le E_2(n)$, then there are integers $k_1$, $k_2$, $q_1$, and $q_2$ with $k_1\leq k_2$
 such that $m=2^{k_1} q_1$ and $n=2^{k_2} q_2$. Since $m=n+l$, we see that $E_2(l)\neq  E_2(n)$.
 This and Theorem  \ref{second:main:thm} imply that $$\gcd(G_{l}^{*}(x),G_{n}^{*}(x))=\gcd(G_{0}^{*}(x),G_{\gcd(n,l)}^{*}(x)).$$
 Thus, $$\gcd(G^{\prime}_{m}(x),G_{n}^{*}(x))=\gcd(G_{0}^{*}(x),G_{\gcd(m,n)}^{*}(x)).$$

 {\bf Case} $m<n$. The proof of this case is similar to the proof of Case $m>n$. It is enough to replace $m$ by
 $n-(l+1)+1$ in $\gcd(G^{\prime}_{m}(x),G_{n}^{*}(x))$, and then use Proposition \ref{gcd:fibonaccilucas:samegcdlucaslucas} part (3).

 {\bf Case} $m=n$. Since $n=(2n-1)-n+1$, taking $m=2n-1$ in Proposition \ref{gcd:fibonaccilucas:samegcdlucaslucas} part (2)
 and using Theorem \ref{second:main:thm} we obtain that $\gcd(G'_{n}(x),G^{*}_{n}(x))$ is equal to
$$
\gcd(G^{\prime}_{(2n-1)-n+1}(x),G^{*} _{n}(x))=\gcd(G^{*} _{2n}(x),G^{*} _n(x))=\gcd(G_{0}^{*}(x),G_{n}^{*}(x)).
$$
This completes the proof.
\end{proof}

In \cite[Theorem 3.4]{hoggatt} is proved that Fibonacci polynomials,
Chebyshev polynomials of second kind, Morgan-Voyce polynomials, and Schechter polynomials satisfy the strong divisibility property.
In \cite[Theorem 7]{HoggattLong} is proved that GFP's of Fibonacci type satisfy the strong divisibility property.
Theorem \ref{gcd:property:fibonacci} proves a necessary and sufficient condition for the polynomials in a
generalized Fibonacci polynomial sequence to satisfy the strong divisibility property. Norfleet \cite{norfleet} also
proved the same strong divisibility property for GFPs of Fibonacci type.

\begin{theorem} \label{gcd:property:fibonacci}
	Let $\{ G_{k}(x)\}$ be a GFP sequence that is either Fibonacci type or Lucas type. For any two positive integers $m$ and $n$ it holds that
	$\{ G_{k}(x)\}$ is a sequence of GFPs of Fibonacci type if and only if  $\gcd(G_{m}(x),G_{n}(x))=G_{\gcd(m,n)}(x)$.
\end{theorem}

\begin{proof}  Let $\{ G_{n}^{\prime}(x)\}$ be a GFP sequence of
Fibonacci type, we now show that
$\gcd(G_{m}^{\prime}(x),G_{n}^{\prime}(x))$ divides $G_{\gcd(m,n)}^{\prime}(x)$ for $m>0$, $n>0$
and vice versa.

If $G_{n}^{\prime}(x)$ is of Fibonacci type, by Proposition \ref{divisity:property:fibonacci} it is clear that
$$G_{\gcd(m,n)}^{\prime}(x) \mid \gcd(G_{m}^{\prime}(x),G_{n}^{\prime}(x)).$$ Next we show that
$\gcd(G_{m}^{\prime}(x),G_{n}^{\prime}(x))$ divides $G_{\gcd(m,n)}^{\prime}(x)$.

Let $k=\gcd(m,n)$ and assume without the loss of generality that $k \ne n$ and $k \ne m$.
The B\'{e}zout identity implies that there are two positive integers $r$ and $s$ such that $k=rm-sn$.
So, $rm=k+sn$ and $G_{rm}^{\prime}(x) =G_{k+sn}^{\prime}(x)$.
This,  Proposition \ref{divisity:Hogat:property1} part (1), and the fact that $k+sn=(k+(sn-1))+1$ imply that
\[ G_{rm}^{\prime}(x)= G_{k+1}^{\prime}(x)G_{s'n}^{\prime}(x)+g(x)G_{k}^{\prime}(x)G_{sn-1}^{\prime}(x). \]

We note that by Proposition \ref{divisity:property:fibonacci}, $G_{m}^{\prime}(x)\mid G_{rm}^{\prime}(x)$
and $G_{n}^{\prime}(x)\mid G_{sn}^{\prime}(x)$. Since both $$\gcd(G_{m}^{\prime}(x),G_{n}^{\prime}(x)) \mid G_{m}^{\prime}(x)\text{ and } \gcd(G_{m}^{\prime}(x),G_{n}^{\prime}(x)) \mid G_{n}^{\prime}(x)$$ hold
and also both $G_{m}^{\prime}(x)\mid G_{rm}^{\prime}(x)$ and $G_{n}^{\prime}(x)\mid G_{s'n}^{\prime}(x)$ hold, we have that
 $\gcd(G_{m}^{\prime}(x),G_{n}^{\prime}(x))$ divides both $G_{rm}^{\prime}(x)$ and $G_{s'n}^{\prime}(x)$. This
 together with Lemma \ref{gcdlemmas} part (3) and the fact that $\gcd(G_{m}^{\prime}(x),G_{n}^{\prime}(x))$
 does not divide $G_{s'n-1}^{\prime}(x)$ imply that $\gcd(G_{m}^{\prime}(x),G_{n}^{\prime}(x))$
 divides $G_{k}^{\prime}(x)$.

 Conversely, suppose that $\{ G_{n}(x)\}$ is a GFP sequence such that the strong divisibility property holds
 or $\gcd(G_{m}(x),G_{n}(x))=G_{\gcd(m,n)}(x)$ for any two positive integers $m$ and $n$. We now show that both $G_{m}(x)$ and
 $G_{n}(x)$ are GFPs of the Fibonacci type.
  We prove this using the method of contradiction.

 If $G_{m}(x)$ and $G_{n}(x)$ are in $\{ G_{n}(x)\}$ such that they are both GFPs of the Lucas type, then by
 Theorem \ref{second:main:thm} we obtain a contradiction.
 This completes the proof.
\end{proof}

\section{The gcd properties of familiar GFPs and questions}

In this section we formulate a general question and present three tables which are corollaries of the main
results in Section \ref{gcd:characterization}. These tables give us the strong divisibility property of the
familiar polynomials which satisfy the Binet formulas  (\ref{bineformulados}) and (\ref{bineformulauno}).
Table \ref{corollary_Fibonacci} gives the greatest common divisors for Fibonacci polynomials, Pell polynomials, Fermat
polynomials, Jacobsthal polynomials, Chebyshev polynomials of the second kind, and one type of Morgan-Voyce ($B_n$) polynomials.
Table \ref{corollary_lucas} gives the strong divisibility property of the Lucas polynomials, Pell-Lucas polynomials,
Fermat-Lucas polynomials, Jacobsthal-Lucas polynomials, Chebyshev polynomials of the first kind, and Morgan-Voyce ($C_n$) polynomials.
Table \ref{corollary_lucas_2} gives the $\gcd$ of a polynomial of Lucas type and its equivalent polynomial of Fibonacci type.

We note here that in the case of Table \ref{corollary_lucas}, the strong divisibility property is partially satisfied since
it only holds when the largest power of 2 that divides $m$ and the largest power of 2 that divides $n$ are equal.
(That is, $E_{2}(m)= E_{2}(n)$.) Similarly the strong divisibility property only holds in Table \ref{corollary_lucas_2}
when $E_{2}(n)<E_{2}(m)$.

For simplicity we present the polynomials in Tables \ref{corollary_Fibonacci}, \ref{corollary_lucas} and \ref{corollary_lucas_2} without the variable  $x$.
\begin{table} [!ht]
\begin{center}\scalebox{1}{
\begin{tabular}{|l|l|l|} \hline
 Polynomial      	 	 	& The Fibonacci $\gcd$ property                   \\ \hline \hline
 Fibonacci            	 	& $\gcd(F_{m}, F_{n})=F_{\gcd(m,n)}$              \\
 Pell		         	 	& $\gcd(P_{m}, P_{n})=P_{\gcd(m,n)}$	          \\
 Fermat		             	& $\gcd(\Phi_{m}, \Phi_{n})= \Phi_{\gcd(m,n)}$    \\
 Chebyshev the second kind      & $\gcd(U_{m}, U_{n})\,=U_{\gcd(m,n)}$            \\
 Jacobsthal		            & $\gcd(J_{m}, J_{n})\,\,=\,J_{\gcd(m,n)}$        \\
 Morgan-Voyce            	&$\gcd(B_{m}, B_{n})=B_{\gcd(m,n)}$                \\
 \hline
\end{tabular}}
\end{center}
\caption{Strong divisibility property of polynomials of Fibonacci type.} \label{corollary_Fibonacci}
\end{table}

\begin{table} [!ht]
	\begin{center}\scalebox{1}{
			\begin{tabular}{|l|l|l|} \hline
				Polynomial      	 	& Case 1: $E_{2}(m)= E_{2}(n)$                                      & Case 2: $E_{2}(m)\ne E_{2}(n)$               \\ \hline \hline
				Lucas               	& $\gcd(D_{m}, D_{n})=D_{\gcd(m,n)}$                                & $\gcd(D_{m}, D_{n})=\,1$                     \\
				Pell-Lucas-prime        & $\gcd(Q_{m}^{'}, Q_{n}^{'})=Q_{\gcd(m,n)}^{'}$                    & $\gcd(Q_{m}^{'}, Q_{n}^{'})=\,1$             \\
				Fermat-Lucas         	& $\gcd(\vartheta_{m}, \vartheta_{n})\,\,=\vartheta_{\gcd(m,n)}$  	& $\gcd(\vartheta_{m}, \vartheta_{n})\,\,=\,1$ \\
				Chebyshev the first kind    & $\gcd(T_{m}, T_{n})\,\,=T_{\gcd(m,n)}$                            & $\gcd(T_{m}, T_{n})\,\,=\,1$                 \\
				Jacobsthal-Lucas        & $\gcd(Q_{m}, Q_{n})=Q_{\gcd(m,n)}$                                & $\gcd(Q_{m}, Q_{n})=\,1$                     \\
				Morgan-Voyce         	& $\gcd(C_{m}, C_{n})\,= C_{\gcd(m,n)}$                             & $\gcd(C_{m}, C_{n})\,=\,1$                   \\
				\hline
		\end{tabular}}
	\end{center}
	\caption{Strong divisibility property (partially) of polynomials of Lucas type.} \label{corollary_lucas}
\end{table}

\begin{table} [!ht]
	\begin{center}\scalebox{1}{
			\begin{tabular}{|l|l|l|} \hline
				Polynomials     	 	  	&  $E_{2}(n)< E_{2}(m)$     					& Otherwise                         \\ \hline \hline
				Fibonacci, Lucas           	& $\gcd(F_{m},D_{n})=D_{d}$                     & $\gcd(F_{m},D_{n})=\,1$           \\
				Pell, Pell-Lucas-prime      & $\gcd(P_{m},Q_{n}^{'})=Q_{d}^{'}$             & $\gcd(P_{m},Q_{n}^{'})=\,1$       \\
				Fermat, Fermat-Lucas       	& $\gcd(\Phi_{m},\vartheta_{n})=\vartheta_{d}$  & $\gcd(\Phi_{m},\vartheta_{n})=\,1$\\
				Chebyshev both kinds 		& $\gcd(U_{m},T_{n})=T_{d}$                     & $\gcd(U_{m},T_{n})=\,1$           \\
				Jacobstal, Jacobsthal-Lucas	& $\gcd(J_{m},j_{n},)=j_{d}$                    & $\gcd(J_{m},j_{n})=\;1$           \\
				Morgan-Voyce both types    	& $\gcd(B_{m},C_{n})= C_{d}$                    & $\gcd(B_{m},C_{n})=\,1$           \\
				\hline
		\end{tabular}}
	\end{center}
	\caption{Strong divisibility property (partially) of polynomials of Lucas type and their equivalents, where $d=\gcd(m,n)$.} \label{corollary_lucas_2}
\end{table}

\subsection{Questions}
\begin{enumerate}
  \item  Let $\{G_{n}^{*}(x)\}$ and $\{ S_{n}(x)\}$ be generalized Fibonacci
polynomial sequences of Lucas type and Fibonacci type, respectively. If $S_{n}(x) $ is not the equivalent of $G_{n}^{*}(x)$, what is the
$\gcd(G_{k}^{*}(x),S_{m}(x))$? We believe that the answer is: $1$ or $x$.

  \item If $\{G_{n}(x)\}$ and $\{S_{n}(x)\}$ are two different generalized Fibonacci polynomial sequences of the same type, then do they
  satisfy the strong divisibility property?

  \item ({\bf Conjecture.}) The GFPs $T_{n}$ and $S_{m}$ satisfy the strong divisibility property if and only if $T_{n}$ and $S_{m}$
  are both of Fibonacci type and they belong to the same generalized Fibonacci polynomial sequence.
Theorems \ref{combine:gcd:Lucas:Fibobacci}, \ref{gcd:property:fibonacci}, and \cite[Lemma 4]{bliss} suggest that the conjecture is true.

\item Let $\mathcal{R}$ be a set of recursive functions. If
$\mathcal{F}  :  \mathbb{N}  \to \mathcal{R}$,  $\mathcal{G} :  \mathcal{R} \times \mathcal{R} \to \mathcal{R}$, and
$g: \mathbb{N} \times \mathbb{N} \to \mathbb{N}$. Under what conditions
$\mathcal{G} \circ  (\mathcal{F} \times \mathcal{F} )= \mathcal{F} \circ g$ for all $\mathcal{F}\in \mathcal{R}$ and a fixed $g$?

\end{enumerate}

\section{Acknowledgement}

We would like to thank an anonymous referee for
suggesting that we refer to papers \cite{bliss, nowicki} which helped us improve the paper.

\bigskip
\hrule
\bigskip

\noindent  MSC 2010:
Primary 11B39; Secondary 11B83.

\noindent \emph{Keywords: }
Greatest common divisor, strong divisibility property,  generalized Fibonacci polynomial, Fibonacci polynomial, Lucas polynomial.

\end{document}